\documentclass[12pt]{amsart}

\setcounter{secnumdepth}{1}
\usepackage[matrix,arrow,curve,frame]{xy}
\usepackage{amsmath,amsthm,amssymb,enumerate,stmaryrd}
\usepackage{latexsym}
\usepackage{amscd}
\usepackage[colorlinks=true]{hyperref}
\usepackage{euscript}
\usepackage{url}
\usepackage{mathrsfs}
\setlength{\oddsidemargin}{0in} \setlength{\evensidemargin}{0in}
\setlength{\marginparwidth}{0in} \setlength{\marginparsep}{0in}
\setlength{\marginparpush}{0in} \setlength{\topmargin}{0in}
\setlength{\headheight}{0pt} \setlength{\headsep}{0pt}
\setlength{\footskip}{.3in} \setlength{\textheight}{9.2in}
\setlength{\textwidth}{6.5in} \setlength{\parskip}{4pt}

\newtheorem*{Thm}{Theorem}
\newtheorem{thm}[subsection]{Theorem}
\newtheorem{defn}[subsection]{Definition}
\newtheorem*{Defi}{Definition}
\newtheorem*{Remark}{Remark}

\newtheorem*{Conj}{Conjecture}

\newtheorem{claim}[subsection]{Claim}
\newtheorem{corr}[subsection]{Corollary}

\newtheorem{conj}[subsection]{Conjecture}
\newtheorem{remark}[subsection]{Remark}

\theoremstyle{definition}
\newtheorem{example}[subsection]{Example}

\newcommand{\cat}{\mathcal}
\newcommand{\lra}{\longrightarrow}

\newcommand{\llra}[1]{\stackrel{#1}{\lra}}

\newcommand{\R}{\mathbb R}
\newcommand{\Z}{\mathbb Z}
\newcommand{\F}{\mathbb F}

\newcommand{\C}{\mathbb C}

\newcommand{\T}{\mathbb T}
\DeclareMathOperator{\Q}{\mathbb{Q}}

\DeclareMathOperator{\Ks}{\mathscr{K}}

\newcommand{\gA}{{\cat A}}
\newcommand{\gSA}{{SA}}
\newcommand{\gF}{{\cat F}}
\newcommand{\gH}{{\cat H}}

\newcommand{\I}{{\cat I}}

\newcommand{\sBC}{s\mathscr{B}}
\newcommand{\sBCU}{s\mathscr{B}_\infty}
\newcommand{\sNBC}{s\mathscr{B}}
\newcommand{\BC}{\mathscr{B}}
\newcommand{\UC}{\mathscr{U}}
\newcommand{\BSa}{\cat Bt\cat S}

\newcommand{\sBS}{s\cat B\cat Sh}
\newcommand{\BS}{\cat B\cat Sh}
\newcommand{\Sh}{\cat Sh}

\DeclareMathOperator{\OSU}{\Omega U}

\DeclareMathOperator{\OSSU}{\Omega SU}

\DeclareMathOperator{\LG}{LG}

\DeclareMathOperator{\Map}{Map}

\DeclareMathOperator{\Br}{Br}
\DeclareMathOperator{\Gr}{Gr}

\DeclareMathOperator{\Tr}{Trig}
\DeclareMathOperator{\E}{E}
\DeclareMathOperator{\Ev}{\cat E}

\DeclareMathOperator{\LgG}{LG}
\DeclareMathOperator{\Ho}{\mbox{H}}

\DeclareMathOperator{\SU}{U}
\DeclareMathOperator{\ESU}{EU}
\DeclareMathOperator{\ESUrT}{EU(r)/T}
\DeclareMathOperator{\ET}{ET}

\DeclareMathOperator{\ESUrGi}{EU(r)/G_i}
\DeclareMathOperator{\ZSU}{ZU}

\DeclareMathOperator{\BSU}{BU}
\DeclareMathOperator{\MU}{MU}
\DeclareMathOperator{\BT}{BT}
\DeclareMathOperator{\LSU}{LU}
\DeclareMathOperator{\LBSU}{LBU}

\DeclareMathOperator{\SSU}{SU}
\DeclareMathOperator{\LSSU}{LSU}

\DeclareMathOperator{\PU}{PU}
\DeclareMathOperator{\BGL}{BGL}

\DeclareMathOperator{\Aut}{Aut}
\DeclareMathOperator{\GL}{GL}

\DeclareMathOperator{\Hh}{\mathscr{H}}
\DeclareMathOperator{\Ah}{\mathscr{A}}
\DeclareMathOperator{\LV}{\mathcal{L}}

\newfont{\german}{eufm10}

  \DeclareMathOperator{\hocolim}{hocolim}

\newcommand\qu{/\kern-.7ex/}
 \newcommand\bl{[ \kern-.2ex[}
  \newcommand\br{] \kern-.2ex]}

 \newcommand{\iso} {\cong}

\begin{document}
\pagestyle{plain}

\title
{Symmetry Breaking and Link homologies II}
\author{Nitu Kitchloo}
\address{Department of Mathematics, Johns Hopkins University, Baltimore, USA}
\email{nitu@math.jhu.edu}
\maketitle

\begin{abstract}
In the first part of this paper, we constructed a filtered $\SU(r)$-equivariant stable homotopy type called the spectrum of strict broken symmetries $\sBC(L)$ of links $L$ given by closing a braid with $r$ strands. Evaluating this filtered spectrum on suitable $\SU(r)$-equivariant cohomology theories gives rise to a spectral sequence of link invariants that converges to the cohomology of the limiting spectrum $\sBCU(L)$. In this paper, we evaluate our construction on Borel equivariant singular cohomology $\Ho_{\SU(r)}^\ast$. We show that the $E_2$-term of the spectral sequence is isomorphic to an unreduced verision of triply-graded link homology. More precisely, we show that the $E_1$-term of the spectral sequence is isomorphic to the Hochschild-homology of Soergel bimodules that was shown by M. Khovanov in \cite{K} to compute triply-graded link homology. We also set up the theory that allows for evaluating our construction on equivariant cohomologies that are twisted by adjoint-equivariant local systems on $\SU(r)$. This allows us to apply Borel equivariant cohomology twisted by a universal power series of the form $\wp(x) = \sum_{i \geq 1} b_i x^i$ with $b_i$ denoting formal variables. The specialization to $\wp(x) = x^n$ gives rise to a link homology that has recently been shown by T. Mej\'{i}a Gomez \cite{Go} to be isomorphic to $sl(n)$-link homology. In other words, $\wp(x) = \sum_{i \geq 1} b_i x^i$ can be interpreted as the (differential of the) universal potential function with no linear term, in the language of matrix factorizations \cite{KR2}.
\end{abstract}

\tableofcontents

\section{Introduction}

\medskip
\noindent
In \cite{Ki1}, we constructed a $\SU(r)$-equivariant filtered homotopy type $\sBC(L)$, called the spectrum of strict broken symmetries, which was an invariant of links $L$. The aim of this article is to apply Borel equivariant singular cohomology $\Ho_{\SU(r)}^\ast$ to the above construction, and to invoke the filtration to set up a spectral sequence that converges to the cohomology of the limiting value $\sBCU(L)$ of the fitration. We also allow for twistings on Borel equivariant cohomology. The pages of the spectral sequence $E_q(L)$ for $q \geq 2$ recover important link invariants. In the untwisted case, we identify the $E_2$-term of our spectral sequence with an unreduced, integral form of triply-graded link homology \cite{K}. Indeed, we show that the $E_1$-term of our spectral sequence can be identified with the complex of Hochschild homologies of Bott-Samelson Soergel bimodules associated to braid words, so that the cohomology of the resulting complex computes triply graded link homology as shown by M. Khovanov in \cite{K}. In the twisted case, we conjecture that one recovers $sl(n)$-link homology for any $n$.

\medskip
\noindent
Let us now recall the definition of the spectrum of strict broken symmetries $\sBC(L)$ as defined in \cite{Ki1}, so as to apply the above construction. In order to make this definition precise, consider a braid element $w \in \Br(r)$, whose closure is the link $L$, and where $\Br(r)$ stands for the braid group on $r$-strands. For the sake of exposition, in this introduction we only consider the case of a positive braid that can be expressed in terms of positive exponents of the elementary braids $\sigma_i$ for $i < r$. The general case will be described later. 

\smallskip
\noindent
Let $I = \{ i_1, i_2, \ldots, i_k \}$ denote an indexing sequence with $i_j < r$, so that a positive braid $w$ admits a presentation in terms of the fundamental generators of the $r$-stranded braid group $\Br(r)$, $w = w_I := \sigma_{i_1} \sigma_{i_1} \ldots \sigma_{i_k}$. Let $T$ denote the standard maximal torus, and let $G_i$ denote the unitary form in the reductive Levi subgroup having roots $\pm \alpha_i$. We consider $G_i$ as a two-sided $T$-space under the left(resp. right) multiplication. 

\medskip
\noindent
The equivariant $\SU(r)$-spectrum of broken symmetries is defined as the (suspension) spectrum corresponding to the $\SU(r)$-space $\BC(w_I)$
\[ \BC(w_I) := \SU(r) \times_T ({G_{i_1}} \times_T {G_{i_2}} \times_T \cdots \times_T {G_{i_k}}) = \SU(r) \times_T \BC_T(w_I) , \]
with the $T$-action on $\BC_T(w_I) := (G_{i_1} \times_T G_{i_2} \times_T \cdots \times_T G_{i_k})$ given by endpoint conjugation
\[ t \,  [(g_1, g_2, \cdots, g_{k-1}, g_k)] := [(tg_1, g_2, \cdots, g_{k-1}, g_kt^{-1})]. \]

\noindent
The $\SU(r)$-stack $\SU(r) \times_T (G_{i_1} \times_T G_{i_2} \times_T \cdots \times_T G_{i_k})$ is equivalent to the stack of principal $\SU(r)$-connections on the trivial $\SU(r)$-bundle over $S^1$, endowed with a reduction of the structure group to $T$ at $k$ distinct points, and so that the holonomy between successive points belongs the corresponding group of the form $G_i$ in terms of this reduction. 

\bigskip
\begin{Defi} (Strict broken symmetries and their normalization), (\cite{Ki1} definition 2.8)

\noindent
Let $L$ denote a link described by the closure of a positive braid $w \in \Br(r)$ with $r$-strands, and let $w_I$ be a presentation of $w$ as $w = \sigma_{i_1} \ldots \sigma_{i_k}$. We first define the limiting $\SU(r)$-spectrum $\sBCU(w_{I})$ of strict broken symmetries as the space that fits into a cofiber sequence of $\SU(r)$-spaces: 
\[ \hocolim_{J \in \I} \BC(w_{J}) \longrightarrow \BC(w_I) \longrightarrow \sBCU(w_{I}). \]
where $\I$ is the category of all proper subsets of $I = \{ i_1, i_2, \ldots, i_k \}$. 

\medskip
\noindent
The spectrum $\sBCU(w_I)$ admits a natural increasing filtration by spaces $F_t \, \sBC(w_I)$ defined as the cofiber on restricting the above homotopy colimit to the full subcategories $\I^t \subseteq \I$ generated by subsets of cardinality at least $(k-t)$, so that the lowest filtration is given by $F_0 \sBC(w_I) = \BC(w_I)$. 

\medskip
\noindent
Define the spectrum of strict broken symmetries $\sBC(w_I)$ to be the filtered spectrum $F_t \, \sBC(w_I)$ above. The normalized spectrum of strict broken symmetries of the link $L$ is defined as 
\[ \sNBC(L) := \Sigma^{-2k} \sBC(w_I). \]
\end{Defi}

\medskip
\noindent
In (\cite{Ki1} theorem 8.5) we  proved the general form of the following result. 

\medskip
\begin{Thm}
As a function of (framed) links $L$\footnote{see \cite{Ki1}, (Section 8) for discussion on the framing enhancement} , the filtered $\SU(r)$-spectrum of strict broken symmetries $\sBC(L)$ is well-defined up to quasi-equivalence (\cite{Ki1} definition 3.4). In particular, the limiting equivariant stable homotopy type $\sBCU(L)$ is a well-defined (framed) link invariant in $\SU(r)$-equivariant spectra (see \cite{Ki1} remark 3.5, and theorem below). 
\end{Thm}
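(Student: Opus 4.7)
The plan is to reduce the invariance statement to a finite checklist of local moves on braid presentations and then verify that each move induces a filtered quasi-equivalence on the $\SU(r)$-equivariant spectrum $\sBC(w_I)$. By Markov's theorem, any two braid words $w_I$ and $w_J$ whose closures are ambient-isotopic links are related by a finite sequence of: (i) the Artin relations inside $\Br(r)$, namely far commutation $\sigma_i \sigma_j = \sigma_j \sigma_i$ for $|i-j|>1$ and the Yang--Baxter relation $\sigma_i \sigma_{i+1} \sigma_i = \sigma_{i+1} \sigma_i \sigma_{i+1}$; (ii) braid conjugation $w \mapsto g w g^{-1}$; and (iii) Markov stabilization $w \leftrightarrow w \sigma_r^{\pm 1}$, which increases the strand count by one. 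One handles the positive braid case of the excerpt first; the general case with negative exponents will reduce to this via the conventions to be introduced in later sections.

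The far commutation and conjugation moves will be essentially formal. When $|i-j|>1$, the subgroups $G_i$ and $G_j$ commute inside $\SU(r)$, so one gets a $T$-equivariant homeomorphism $G_i \times_T G_j \cong G_j \times_T G_i$ that preserves the filtration by cardinality of index subsets verbatim. Braid conjugation is absorbed by the outer balanced product $\SU(r)\times_T(-)$, and the cofiber defining $\sBCU$ is functorial in the index category $\I$. The Yang--Baxter relation requires more care: the goal is a filtered $T$-equivariant quasi-equivalence between $G_i \times_T G_{i+1} \times_T G_i$ and $G_{i+1} \times_T G_i \times_T G_{i+1}$. The natural strategy is to identify both triple products as substacks in the moduli of principal $\SU(3)$-connections on $S^1$ equipped with three $T$-reductions, and then to verify that the resulting comparison preserves the stratification by which holonomies are trivial, so that it descends to a quasi-equivalence on filtration quotients.

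The main obstacle will be Markov stabilization. The move $w \leftrightarrow w \sigma_r^{\pm 1}$ inside $\Br(r+1)$ changes both the ambient group and the length $k$, so one must compare spectra equivariant for different compact Lie groups. The strategy is to exhibit the extra factor $G_r$ in $\BC_T(w\sigma_r^{\pm 1})$ as the total space of an equivariant $\CP^1$-bundle whose Thom class is exactly cancelled by the normalizing desuspension $\Sigma^{-2}$ appearing in the definition $\sNBC(L) = \Sigma^{-2k}\sBC(w_I)$. Concretely, inside the cofiber defining $\sBCU$, strata coming from proper subsets of $I\cup\{r\}$ which omit the new index $r$ should pull back to the unstabilized construction, while strata that include $r$ should become contractible after the desuspension. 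Assembling these identifications and tracking their compatibility with the filtration will produce the required quasi-equivalence. The hardest technical point is that quasi-equivalence in the sense of \cite{Ki1} Definition~3.4 must be verified stratum by stratum rather than globally, and it is precisely this flexibility that allows the stabilization shift to be absorbed; checking the cell-by-cell cancellation in the Markov step is where the bulk of the work in \cite{Ki1} Theorem~8.5 is concentrated.
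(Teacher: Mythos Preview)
This theorem is not proved in the present paper; it is cited from \cite{Ki1} Theorem~8.5. The current paper only engages with the argument indirectly, by verifying for Borel-equivariant cohomology the ``B-type'' and ``M2a/b-type'' conditions of \cite{Ki1} Definition~8.6 (see the proof of Theorem~\ref{DomK2-11c} and Claim~\ref{DomK3-13}). So there is no self-contained proof here to set your sketch against; what can be compared is your outline versus the mechanism of \cite{Ki1} as it is reflected in those passages.

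On that comparison, your Yang--Baxter step has a genuine gap. You propose a direct filtered $T$-equivariant quasi-equivalence between $G_i \times_T G_{i+1} \times_T G_i$ and $G_{i+1} \times_T G_i \times_T G_{i+1}$, but these two $T\times T$-spaces are \emph{not} equivalent: they are distinct Bott--Samelson resolutions of the same Schubert variety, and there is no map between them compatible with the two cubes of proper-subset inclusions. The device one actually sees invoked here is a zig-zag through a common singular target: both triple products map by group multiplication onto the image variety $\Sh_{i,j,i}$, giving the broken Schubert spectra $\BS^{(i,j,m)}$, and quasi-equivalence is established by controlling the fibres $Z_m$ of these maps filtration-by-filtration (this is exactly what Claim~\ref{DomK3-13} is checking in cohomology). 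Your ``identify both as substacks of connections'' does not supply such a zig-zag and would not go through as written. Your Markov paragraph is also too optimistic: strata containing the new index $r$ do not become contractible after the desuspension. What happens instead is a Thom-isomorphism identification of the associated graded together with separate triviality and injectivity checks on the connecting maps (the M2a/b conditions), while the change in cube size is absorbed by the filtration re-indexing $[\varrho_I]$ in the normalization, not by the suspension alone.
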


\medskip
\noindent
In Section \ref{DomK2} of this paper we will prove, and then generalize the following theorem to arbitrary braids. Similar ideas have been explored in \cite{WW}.

\medskip
\begin{Thm}
Given a link $L$ given by the closure of a positive braid word $w_I$ of length $k$ in $r$-strands as above, then evaluating equivariant singular cohomology on the filtered spectrum $\sBC(L)$ gives rise to a cohomologically graded spectral sequence 
\[ E_1^{t,s} = \bigoplus_{J \in \I^t/\I^{t-1}}    \Ho_{\SU(r)}^s(\BC(w_J))  \, \, \Rightarrow \, \, \Ho_{\SU(r)}^{s+t-2k}(\sBCU(L)). \]
The terms $ \Ho_{\SU(r)}^\ast(\BC(w_J))$ are isomorphic to the Hoschschild homology of Soergel bimodules indexed by subsets $J \subseteq I$, and the differential $d_1$ is induced by inclusion of subsets. It follows from \cite{{K, Ka}} that the $E_2$-term is isomorphic to an integral form of triply-graded link homology, whose value on the unknot has the form $\Z[x] \otimes \Lambda (y)$, with the degrees of $x$ and $y$ being $2$ and $1$ resp. Furthermore, the terms $E_q(L)$ are invariants of the link $L$ for all $q \geq 2$. In particular, $\Ho_{\SU(r)}^\ast(\sBCU(L))$ is an integral version of the invariant Rasmussen calls $E_\infty(-1)$ (see Theorem 3 in \cite{J2}), that only records the number of components of $L$.
\end{Thm}

\medskip
\begin{Remark}
In (\cite{Ki1} see remark 6.6), we showed that the filtered spectrum $\sBC(w_I) \wedge_T S^0$ obtained by taking the orbits under the right $T$-action is invariant under the Braid and inverse relations and represents a filtered equivariant homotopy type for the complex of Bott-Samelson Soergel bimodules studied by Rouquier in \cite{R,R2}. 
\end{Remark}

\smallskip
\noindent
In order to describe twistings of equivariant cohomology theories so as to make a connection with $sl(n)$-link homology, let us start by noticing that the spaces of broken symmetries $\BC(w_I)$ admit a canonical equivariant map to conjugation action of $\SU(r)$ on itself
\[ \rho_I : \BC(w_I) \longrightarrow \SU(r), \quad \quad [(g, g_1, \ldots, g_k)] \longmapsto g(g_1 g_2 \ldots g_k) g^{-1}. \]
Thinking of the co-domain $\SU(r)$ as the space of principal $\SU(r)$-connections on the circle, we see that the map $\rho_I$ is the map that simply composes the holonomies of all the broken symmetries along the circle. It follows that $\rho_I$ is compatible under inclusions of subsets $J \subseteq I$. We show in section \ref{LiftsSBS} that this compatibility allows us to use the derived equivariant local system with values in $\SU(r)$ to define twists $\theta$ of any suitable family $\E_{\SU(r)}$ of equivariant cohomology theories. We also study the action of symmetries and cohomology operations on these twisted theories. 

\medskip
\noindent
In sections \ref{LiftsSBS} and \ref{HT}, we also describe a twist of Borel equivariant cohomology, by first extending it to a multiplicative equivariant theory $\Hh$, so that $\Hh_{\SU(r)}$ is obtained from Borel equivariant singular cohomology $\Ho_{\SU(r)}$ by adjoining formal variables $b_i$ in cohomological degree $-2i$. Let $\wp(x) = \sum_{i \geq 1} b_i x^i$ be the formal power series with the coefficients being the variables $b_i$ with $x$ having cohomological degree $2$. We may use the above framework to describe a twist $\theta_\wp$ of $\Hh$ denoted by $\sideset{^{\theta}}{^\ast}\Hh$. In section \ref{RSS} we use a filtration of $\sideset{^{\theta}}{^\ast}\Hh$ by powers of the ideal of indecomposables in $b_1, b_2, \ldots$, so that the total filtration of $\sideset{^{\theta}}{^\ast}\Hh_{\SU(r)}(\sBC(L))$ gives rise to a family of link homology theories defined as the $E_q$-terms of the corresponding spectral sequence for any $q \geq 2$. Furthermore the first differential $d_1$ is a sum of two differentials, the first being induced by the inclusion of subsets $J$ as before, and the second induced by the filtration of $\sideset{^{\theta}}{^\ast}\Hh$ by powers of the ideal of indecomposables in $b_1, b_2, \ldots$. For any $q \geq 2$, the value of these theories on unknot $L$ is 
\[  E^{0,\ast}_q(L, \theta_{\wp}) = E^{0,\ast}_2(L, \theta_{\wp}) = \frac{\Z[b_1, b_2, \ldots] \llbracket x \rrbracket}{\langle \wp(x) \rangle}, \]
where $\Z[b_1, b_2, \ldots] \llbracket x \rrbracket$ denotes graded power series in $x$ with coefficients in the variables $b_i$. This appears to suggest that the $E_2(L, \theta_{\wp})$-term gives rise to the $sl(n)$-link homology with respect to the universal matrix factorization whose potential has the differential $\wp(x)$. 

\medskip
\begin{Conj}
The algebraic specialization $\wp(x) = x^n$ of the $E_1(L, \theta_\wp)$ term above leads to homology groups that are isomorphic to $sl(n)$-link homology. See \ref{Mainconj} for more details.
\end{Conj}

\noindent
Expressing $E_1(L,\theta_{\wp})$ as a bicomplex gives rise to a topological version of the Rasmussen spectral sequence (see Theorem 2 in \cite{J2}), that begins with the Triply graded link homology of $L$ and converges to $E_2(L, \theta_{\wp})$ above. In \cite{Go}, T. Mej\'{i}a Gomez has compared the topological and algebraic Rasmussen spectral sequences to resolve the above conjecture in the affirmative. 

\medskip
\noindent
\section{Borel equivariant cohomology of Broken symmetries and Soergel bimodules} \label{DomK2} 

\noindent
Before we start with the main results as stated in the previous section, let us first recall the general formulation of the statements of the definitions and theorems for arbitrary links $L$. Let $T = T^r \subseteq \SU(r)$ denote the standard maximal torus, and let $\Br(r)$ denote the braid group generated by the standard braids $\sigma_i, 1 \leq i < r$. The weights of $T$ will be denoted by $\sum_{i \leq r} \Z\langle x_i \rangle$, and will be identified with $\Ho_T^2$, so that the simple roots $\alpha_i$ are expressed as $x_i - x_{i+1}$. Let $h_i, 1 \leq i < r$ denote the co-roots, so that $\alpha_j(h_i) = a_{ij}$ are the entries in the Cartan matrix for $\SU(r)$. In this article, we often require a basis of weights constructed out of {\em dual} co-roots. This basis $\{ h_i^\ast, 1 \leq i \leq r \}$ is defined in term of the generators $x_j$ as 
\[ h_i^\ast = \sum_{j \leq i} x_j, \quad \quad \mbox{in particular, we have} \quad \quad h_i^\ast(h_j) = \delta_{i,j} \quad \mbox{for} \quad j < r. \]
Identifying $\Ho_T^2$ with $\Ho^1(T)$ via the transgression map, the collection of elements $h_i^\ast$ as well as the collection $x_i $ yield a $\Z$-basis of $\Ho^1(T,\Z)$. We will use the notation $\hat{x}_i$ to indicate the corresponding basis of $\Ho^1(T)$ if it is not clear from context. The action of the Weyl group on $\Ho_T^2$, with generators also denoted by $\sigma_i$ (since the group is clear from context) is
\[ \sigma_i \alpha := \alpha - \alpha(h_i) \alpha_i. \]
Notice that by definition $h_r^\ast$ is the central character that is invariant under the Weyl group. Let $G_i \subseteq \SU(r)$ denote the unitary form in the reductive Levi subgroup generated by the roots $\pm \alpha_i$. Let $\zeta_i$ denote the virtual $G_i$ representation $(\mathfrak{g}_i-r\R)$, where $\mathfrak{g}_i$ is the adjoint representation of $G_i$ denoted by $Ad$, and $r\R$ is the trivial representation of dimension $r$. Notice that the restriction of $\zeta_i$ to $T$ is isomorphic to the root space representation $\alpha_i$ (as a real representation). Let $S^{-\zeta_i}$ denote the sphere spectrum for the virtual $G_i$ representation $-\zeta_i$ defined as in (\cite{Ki1} definition 2.3). Consider a general indexing sequence for arbitrary braid words $I := \{ \epsilon_{i_1} i_1, \cdots, \epsilon_{i_k} i_k \}$, where $i_j < r$, and $\epsilon_j = \pm 1$. Assume that $w = w_I := \sigma_{i_1}^{\epsilon_{i_1}} \cdots \sigma_{i_k}^{\epsilon_{i_k}}$. Recall the spectrum $\BC(w_{I}) := \SU(r)_+ \wedge_T \BC_T(w_I)$ (\cite{Ki1} definition 2.4),  with
\[  \BC_T(w_{I}) := H_{i_1} \wedge_T \ldots  \wedge_T H_{i_k},  \, \, \mbox{and} \, \, H_i = S^{-\zeta_i} \wedge \, {G_i}_+,  \, \, \mbox{if} \, \, \epsilon_i = -1, \, \,  H_i = {G_i}_+ \, \,  \mbox{else}. \]
The $T \times T$-action on $H_i$ is defined by demanding that an element $(t_1, t_2) \in T \times T$ acts on $S^{-\zeta_i} \wedge \, {G_i}_+$ by smashing the action $Ad(t_1)_\ast$ on $S^{-\zeta_i}$ with the standard $T \times T$ action on ${G_i}_+$ given by left (resp. right) multiplication. As before the $T$-action on $\BC_T(w_I)$ is by conjugation on the first and last factor. It is clear that each bundle $\zeta_i$ above represents a $\SU(r)$-equivariant vector bundle over $\SU(r) \times_T(G_{i_1} \times_T \cdots \times_T G_{i_k}) := \BC(w_{\underline{I}})$, where $\underline{I}$ is the indexing set obtained from $I$ by replacing each $\epsilon_j$ with $1$. By construction $\BC(w_{I})$ is the corresponding equivariant Thom spectrum over $\BC(w_{\underline{I}})$
\[ \BC(w_I) = \BC(w_{\underline{I}})^{-\zeta_I}, \quad \mbox{where} \quad \zeta_I := \bigoplus_{i_j \in I | \epsilon_{i_j} = -1} \zeta_{i_j}. \]

\begin{defn} (The functor $\BC(w_I)$, \cite{Ki1} defintion 2.6) \label{DomK3-4}

\noindent
Given a braid word $w_{I}$, for $I = \{ \epsilon_{i_1} i_1, \cdots, \epsilon_{i_k} i_k \}$, let $2^{I}$ denote the set of all subsets of $I$. Let us define a poset structure on $2^{I}$ generated by demanding that nontrivial indecomposable morphisms $J \rightarrow K$ have the form where either $J$ is obtained from $K$ by dropping an entry $i_j \in K$ (i.e. an entry for which $\epsilon_{i_j} = 1$), or that $K$ is obtained from $J$ by dropping an entry $-i_j$ (i.e an entry for which $\epsilon_{i_j} = -1$). The construction $\BC(w_{J})$ induces a functor from the category $2^{I}$ to $\SU(r)$-spectra. More precisely, given a nontrivial indecomposable morphism $J \rightarrow K$ obtained by dropping $-i_j$ from $J$, the induced map $\BC(w_{J}) \rightarrow \BC(w_{K})$ is obtained by applying the map of (\cite{Ki1} claim 2.5). Likewise, if $J$ is obtained from $K$ by dropping the factor $i_j$, then the map $\BC(w_{J}) \rightarrow \BC(w_{K})$ is defined as the canonical inclusion. 
\end{defn}

\begin{defn} \label{DomK3-5} (Strict broken symmetries, \cite{Ki1} definitions 2.7, 2.8, 2.10) 

\noindent
let $I^+ \subseteq I$ denote the terminal object of $2^{I}$ given by dropping all terms $-i_j$ from $I$ (i.e. terms for which $\epsilon_{i_j} = -1$). Define the poset category $\I$ to the subcategory of $2^{I}$ given by removing $I^+$
\[ \I = \{ J \in 2^{I}, \,  J \neq I^+ \}. \]
\noindent
The filtered $\SU(r)$-spectrum $F_t \sBC(w_{I})$ of strict broken symmetries is defined via the cofiber sequence of equivariant $\SU(r)$-spectra
\[ \hocolim_{J \in \I^t} \BC(w_{J}) \longrightarrow \BC(w_{I^+}) \longrightarrow F_t \, \sBC(w_{I}), \]
where $\I^t \subseteq \I$ consisting of objects no more than $t$ nontrivial composable morphisms away from $I^+$. We define $F_0 \, \sBC(w_{I}) = \BC(w_{I^+})$, and $F_k = \ast$ for $k < 0$. We normalize the spectrum as 
\[ \sNBC(L) := \Sigma^{l(w_I)} \sBC(w_I)[\varrho_I], \]
where $\Sigma^{l(w_I)}$ denotes the suspension by $l(w_I) := l_-(w_I) - 2l_+(w_I)$ with $l_+(w_I)$ being the number of positive and $l_-(w_I)$ being the number of negative exponents in the presentation $w_I$ for $w$ in terms of the generators $\sigma_i$. Also, $\sBC(w_I)[\varrho_I]$ denotes the filtered spectrum $\sBC(w_I)$ with a shift in indexing given by $F_t \, \sBC(w_I)[\varrho_I] := F_{t+\varrho_I} \, \sBC(w_I)$, with $\varrho_I$ being one-half the difference between the cardinality of the set $I$, denoted by $|I|$, and the mimimal word length $|w|$ of $w \in \Br(r)$,
\[ \varrho_I = \frac{1}{2} (|I| - |w|). \]
\end{defn}

\noindent
Our goal now is to apply Borel equivariant cohomology $\Ho_{\SU(r)}^\ast$ to our construction. 

\begin{defn} (Borel equivariant singular cohomology) \label{DomK3-5a}

\noindent
Borel equivariant cohomology of a $\SU(r)$-spectrum $X$ is defined as the singular cohomology of the Borel construction $X \wedge_{\SU(r)} \ESU(r)_+$, where $\ESU(r)$ denotes the free contractible $\SU(r)$-CW complex. 
\end{defn}


\medskip
\noindent
\begin{defn} (Bott-Samelson varieties) \label{DomK2-10b}

\noindent
Let $I = \{i_1, \ldots, i_k\}$. Define the Bott-Samelson variety $\BSa(w_I)$ as
\[ \BSa(w_I) = \BC(w_I)/T = \SU(r) \times_T (G_{i_1} \times_T \cdots \times_T G_{i_k}/T). \]
Notice that $\BSa(w_I)$ supports two equivariant maps to $\SU(r)/T$ defined as
\[ \pi(I) : \BSa(w_I) \longrightarrow \SU(r)/T, \quad [(g, g_1, g_2, \cdots, g_{k-1}, g_k)] \longmapsto gT, \]
\[ \quad \quad \quad \, \, \tau(I) : \BSa(w_I) \longrightarrow \SU(r)/T, \quad [(g, g_1, g_2, \cdots, g_{k-1}, g_k)] \longmapsto gg_1\ldots g_k T.\]
These maps endow $\Ho_{\SU(r)}^\ast(\BSa(w_I))$ with a bimodule structure over the coefficients $\Ho_T^\ast$. 
\end{defn}

\medskip
\noindent
Let us now explore how this relates to Soergel bimodules. 

\medskip
\begin{thm} \label{DomK2-11a}
Given a positive indexing sequence $I = \{ i_1, \ldots, i_k \}$, the equivariant cohomology $\Ho_{\SU(r)}^\ast(\BSa(w_I))$ is isomorphic as bimodules to the ``Bott-Samelson" Soergel bimodule \cite{Le} given by a tensor product
\[ \Ho_{\SU(r)}^\ast(\BSa(\sigma_{i_1})) \otimes_{\Ho_T^\ast} \ldots \otimes_{\Ho_T^\ast} \Ho_{\SU(r)}^\ast(\BSa(\sigma_{i_k})). \]
Furthermore, each term $\Ho_{\SU(r)}^\ast(\BSa(\sigma_i))$ is isomorphic to the bimodule
\[ \Ho_{\SU(r)}^\ast(\BSa(\sigma_i)) = \Ho_T^\ast \otimes_{\Ho_{G_i}^\ast} \Ho_T^\ast, \]
where $\Ho_{G_i}^\ast$ denotes the $G_i$-equivariant cohomology of a point. Finally, the equivariant cohomology $\Ho_{\SU(r)}^\ast(\BC(w_I))$ is isomorphic as $\Ho_T^\ast$-modules to the Hochschild homology of the bimodule $\Ho_{\SU(r)}^\ast(\BSa(w_I))$ with coefficients in the bimodule $\Ho_T^\ast$. Furthermore, this isomorphism is canonical if one works over $\Q$. 
\end{thm}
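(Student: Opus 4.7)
The plan is to prove the three assertions in sequence, treating the single-generator case first, bootstrapping to the iterated tensor product, and finally reinterpreting everything as Hochschild homology.

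For the atomic claim (2), since $\BSa(\sigma_i) = \SU(r) \times_T G_i/T$, the change-of-groups identity $\Ho_{\SU(r)}^\ast(\SU(r)\times_T Y) = \Ho_T^\ast(Y)$ reduces the computation to $\Ho_T^\ast(G_i/T)$. Borel-constructing with respect to the commuting left and right $T$-actions on $G_i$ realizes this as $\Ho^\ast(BT \times_{BG_i} BT)$. Because $BG_i$ is simply connected and $\Ho_T^\ast$ is a free module over $\Ho_{G_i}^\ast$ (Borel's theorem for the reductive Levi $G_i$), the Eilenberg--Moore spectral sequence collapses to $\Ho_T^\ast \otimes_{\Ho_{G_i}^\ast} \Ho_T^\ast$, as asserted.

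For claim (1), I would exploit the pullback square in Definition \ref{DomK2-10b}, which presents $\BSa(w_{J_j})$ as the fiber product $\BSa(w_{J_{j-1}}) \times_{\SU(r)/T} \BSa(\sigma_{i_j})$ over $\SU(r)/T$. Since $\SU(r)/T$ is simply connected with $\Ho_{\SU(r)}^\ast(\SU(r)/T) = \Ho_T^\ast$, and each factor is free over $\Ho_T^\ast$ by Theorem \ref{DomK2-10}, Eilenberg--Moore again collapses, yielding
\[ \Ho_{\SU(r)}^\ast(\BSa(w_{J_j})) \iso \Ho_{\SU(r)}^\ast(\BSa(w_{J_{j-1}})) \otimes_{\Ho_T^\ast} \Ho_{\SU(r)}^\ast(\BSa(\sigma_{i_j})). \]
Induction on $j$ produces the iterated tensor product; the Soergel bimodule structure matches because the maps $\pi(I), \tau(I)$ factor through the $\pi$-map of the first factor and the $\tau$-map of the last factor respectively.

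For claim (3), let $X := G_{i_1} \times_T \cdots \times_T G_{i_k}$ with its commuting left and right $T$-actions. Then $\BSa_T(w_I) = X/T_{\text{right}}$, whereas $\BC_T(w_I)$ is $X$ itself with the diagonal conjugation $T$-action. The diagonal inclusion $T \hookrightarrow T \times T$ produces a homotopy pullback square
\[
\xymatrix{
X_{hT_{\text{diag}}} \ar[r] \ar[d] & X_{h(T \times T)} \ar[d] \\
BT \ar[r] & BT \times BT,
}
\]
whose top row becomes the natural comparison map after the identifications $\Ho^\ast(X_{hT_{\text{diag}}}) = \Ho_{\SU(r)}^\ast(\BC(w_I))$ and $\Ho^\ast(X_{h(T \times T)}) = \Ho_{\SU(r)}^\ast(\BSa(w_I))$. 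Since $BT \times BT$ is simply connected, the Eilenberg--Moore spectral sequence converges, with $E_2$-page
\[ \Tor^{\Ho_T^\ast \otimes \Ho_T^\ast}_{\ast,\ast}(\Ho_T^\ast, \Ho_{\SU(r)}^\ast(\BSa(w_I))) \, = \, \HH_\ast(\Ho_T^\ast, \Ho_{\SU(r)}^\ast(\BSa(w_I))), \]
the right-hand side being the Hochschild homology of the Bott--Samelson Soergel bimodule by definition.

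The main obstacle will be the last step: whereas the first two reduce to routine Eilenberg--Moore collapses by freeness, in Step 3 the derived tensor product genuinely contributes higher $\Tor$ terms, so one must argue both for collapse at $E_2$ and for the absence of module extensions on the abutment. The natural route is to compare a small Koszul resolution of $\Ho_T^\ast$ as an $(\Ho_T^\ast \otimes \Ho_T^\ast)$-module with the odd-degree exterior generators $\beta_s, \gamma_l$ furnished by Theorem \ref{DomK2-11}; matching these classes on the nose should force the collapse and pin down the $\Ho_T^\ast$-module structure.
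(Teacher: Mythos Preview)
Your approach to claims (1) and (2) is identical to the paper's: the same pullback squares, the same Eilenberg--Moore collapse by freeness. For claim (3) the paper takes a slightly different route: rather than the Eilenberg--Moore spectral sequence of your pullback square over $BT \times BT$, it uses the Serre spectral sequence of the principal $T$-fibration $\ESU(r) \times_{\SU(r)} \BC(w_I) \to \ESU(r) \times_{\SU(r)} \BSa(w_I)$, whose $E_2$-page $\Lambda(\gamma_1,\dots,\gamma_r) \otimes \Ho_{\SU(r)}^\ast(\BSa(w_I))$ with transgressive $d_2(\gamma_k) = \pi(I)^\ast(h_k^\ast) - \tau(I)^\ast(h_k^\ast)$ is exactly the Koszul complex computing Hochschild homology. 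The paper then has to show collapse at $E_3$, which it does by filtering the Koszul complex by monomial length in the redundant $\delta_j$'s and matching the resulting algebraic $E_1$-term with the associated graded of Theorem~\ref{DomK2-11}; freeness over $\Ho_T^\ast$ then kills the extension problems. Your Eilenberg--Moore route packages the same data one page later (your $E_2 = \Tor = \HH$ is the paper's $E_3$), and the collapse-and-extension argument you sketch is precisely the one the paper carries out. Both are correct; the Serre version has the mild advantage of making the Koszul differentials explicit from the start, which the paper exploits to read off $d(\gamma_s) = \hat{\delta}_s$ directly from the recursion of Theorem~\ref{DomK2-10}.
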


\begin{proof}
Let us begin by noticing that one has a pullback diagram
\[
\xymatrix{
\ESU(r) \times_{T} (G_i/T)   \ar[d]^{\pi} \ar[r]^{\quad \tau} &  \ESU(r)/T \ar[d]^{\pi} \\
\ESU(r)/T      \ar[r]^{\tau} & \ESU(r)/G_i.
}
\]
where $\tau$ is induced by the right action of $G_i$ on $\ESU(r)$, and the map $\pi$ by the projection $G_i/T \longrightarrow pt$. Now all the spaces in the above diagram have torsion free, evenly graded cohomology, and so we can invoke the Eilenberg-Moore spectral sequence \cite{S} to calculate the cohomology of the pullback. The spectral sequence collapses to the above tensor product of the two copies of $\Ho_T^\ast$ over $\Ho_{G_i}^\ast$ since $\Ho_T^\ast$ is a free module over $\Ho_{G_i}^\ast$. 

\medskip
\noindent
Notice also that $\Ho_{\SU(r)}^\ast(\BSa(\sigma_i))$ is a free $\Ho_T^\ast$ module under either the left or the right module structure as can be verified directly, or by using the fact that the Serre spectral sequence of the fibration given by the map $\pi$ collapses (since both the fiber and base have evenly graded cohomology).

\medskip
\noindent
Now consider the pullback diagram of definition \ref{Bott-Samelson}. Inductively using freeness over $\Ho_T^\ast$, and the Eilenberg-Moore spectral sequence, we get the iterated tensor product decomposition for $\Ho_{\SU(r)}^\ast(\BSa(w_I))$ as claimed in the statement of the theorem. 
It remains to identify $\Ho_{\SU(r)}^\ast(\BC(w_I))$ with the Hochschild homology of $\Ho_{\SU(r)}^\ast(\BSa(w_I))$ with coefficients in $\Ho_T^\ast$. For this, consider the principal $T$-fibration given by the canonical projection
\[ \ESU(r) \times_{\SU(r)} \BC(w_I) \longrightarrow \ESU(r) \times_{\SU(r)} \BSa(w_I). \]
It is easy to see that this fibratrion is classified by the map
\[ \ESU(r) \times_{\SU(r)} \BSa(w_I) \llra{\pi(I) \times \tau(I)} \BT \times \BT \longrightarrow \BT, \]
where the first map is given by the product of the maps defined in definition \ref{DomK2-10b} that endow $\Ho_{\SU(r)}^\ast(\BC(w_I))$ with a bimodule structure, and the second map represents the map that classifies the difference map $T \times T \rightarrow T$, $(s,t) \longmapsto st^{-1}$. 

\medskip
\noindent
We now consider the Serre spectral sequence for the fibration defined above. Let us pick generators $\{ \epsilon_1, \ldots, \epsilon_r \} \subset \Ho^1(T, \Z)$ so that $\epsilon_i$ is identified with the dual co-root $h_i^\ast$ (defined at the beginning of this section) under the identification $\Ho^1(T) = \Ho_T^2$. The $E_2$-term of the Serre spectral sequence computing $\Ho_{\SU(r)}^\ast(\BC(w_I))$ is given by
\[ \Lambda(\epsilon_1, \ldots, \epsilon_r) \otimes \Ho_{\SU(r)}^\ast(\BSa(w_I)), \quad d(\epsilon_k) = \pi(I)^\ast(h_k^\ast) - \tau(I)^\ast(h^\ast_k). \]
This complex is the standard Koszul complex that computes the Hochschild homology of the bimodule $\Ho_{\SU(r)}^\ast(\BSa(w_I))$ with coefficients in $\Ho_T^\ast$. This Hochschild homology can be shown to be a free module over $\Ho_T^\ast$ by an induction argument using braid invariance and the Markov moves (see Lemma 3.5 and Corollary 3.6 in \cite{GHMN}, which can easily be checked to also work over $\Z$). Next, we show that the Serre spectral sequence above collapses at $E_3$\footnote{this is a special case of a conjecture of Kumar \cite{Ku}(11.5.18)}. Furthermore, using freeness over $\Ho_T^\ast$, all extensions problems in the spectral sequence can be solved. 

\medskip
\noindent
In order to establish the collapse at $E_3$, let us begin by recalling that the preceding discussion describes the space $\ESU(r) \times_{\SU(r)} \BC(w_I)$ inductively via homotopy pullbacks along maps between classifying spaces of compact Lie groups of semisimple rank one. In particular, given any odd integer $k$, one has an unstable Adams operation $\psi_k$ that acts on $\Ho^\ast_{\SU(r)} (\BC(w_I))$ \cite{Wi}. By the naturality of $\psi_k$, we see that it induces an action on the Serre spectral sequence above. Since the fiber and base of the Serre spectral sequence are generated by degree one (resp. two) classes, one can show that in the $(r,s)$-grading of the spectral sequence, $\psi_k$ acts by scalar multiplication with $k^{s+r/2}$. Since $d_n$ commutes with $\psi_k$, a trivial diagram chase now forces $d_n$ to be trivial for $n>2$ for grading reasons. 

\smallskip
\noindent
Finally, assume one is working with coefficients in $\Q$. We may decompose the groups $\Ho_{\SU(r)}^\ast(\BC(w_I), \Q)$ into the distinct eigenspaces of $\psi_k$. The above discussion shows that these eigenspaces  may be canonically identified with elements in homogeneous Hochschild bidegree as described above. In other words, the extension problems of the Serre spectral sequence described above can be {\em canonically} solved over $\Q$. 
\end{proof}

\medskip
\begin{remark} \label{Galois3}
The Galois symmetry $\sigma$ (\cite{Ki1} Section 8) acting on $\Ho_{\SU(r)}^\ast(\BC(w_I))$ extends the Adams operation $\psi_k$ used above. The action of $\sigma$ can be described in terms of the Koszul complex, where $\sigma$ acts as $-1$ on $\Ho^1(T)$ and $\Ho_{\SU(r)}^2(\BSa(w_I))$. 
\end{remark}

\medskip
\begin{remark} \label{DomK2-11b}
In (\cite{Ki1} see remark 6.6), we pointed out that the filtered equivariant homotopy type $\sBC(w_I) \wedge_T S^0$ is independent of the braid and inversion relations, where we take the $T$-orbits under the right $T$-action on $\sBC(w_I)$. In particular, it lifts the chain complexes of Bott-Samelson Soergel bimodules studied by Rouquier in \cite{R,R2}. 
\end{remark}

\medskip
\noindent
It remains to establish the existence of the spectral sequence, whose $E_1$-term is isomorphic to an integral form of Hochschild homologies of Soergel bimodules. 

\medskip
\begin{thm} \label{DomK2-11c}
One has a cohomologically graded spectral sequence with $E_1$-term given by 
\[ E_1^{t,s} = \bigoplus_{J \in \I^t/\I^{t-1}}    \Ho_{\SU(r)}^s(\BC(w_J))  \, \, \Rightarrow \, \, \Ho_{\SU(r)}^{s+t+l(w_I)}(\sBCU(L)). \]
The differential $d_1$ is the canonical simplicial differential induced by the functor in (\cite{Ki1} definition 2.6). In addition, the terms $E_q(L)$ are invariants of the link $L$ for all $q \geq 2$. Furthermore, the $E_2$-term of this spectral sequence is isomorphic to an unreduced, integral form of triply-graded link homology as defined in \cite{K, Ka} whose value on the unknot is $\Ho_{\SU(1)}^\ast(\SU(1)) = \Z[x] \otimes \Lambda(y)$. 
\end{thm}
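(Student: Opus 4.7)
The plan is to extract the spectral sequence from the increasing filtration $\{F_t\sBC(w_I)\}$ of Definition \ref{DomK3-5}, identify its $E_1$-page and $d_1$-differential combinatorially, and then apply Theorem \ref{DomK2-11a} to recognize $E_2$ as Khovanov's complex of Hochschild homologies of Soergel bimodules.

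First, I would apply Borel equivariant cohomology to the filtered $\SU(r)$-spectrum of Definition \ref{DomK3-5}. By construction, $F_t \, \sBC(w_I)$ is the cofiber of a homotopy colimit over $\I^t$, so that the successive quotient $F_t/F_{t-1}$ splits, up to the suspension shift $\Sigma^{l(w_I)}$ built into $\sNBC(L)$, as a wedge $\bigvee_{J \in \I^t/\I^{t-1}} \BC(w_J)$ indexed by those $J$ reached from $I^+$ by exactly $t$ nontrivial indecomposable morphisms in the poset of Definition \ref{DomK3-4}. The standard spectral sequence of a filtered spectrum then yields the $E_1$-page with the stated indexing and convergence to $\Ho_{\SU(r)}^{s+t+l(w_I)}(\sBCU(L))$. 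The $d_1$-differential is, by general principles for spectral sequences of filtered spectra arising from homotopy colimits over a poset, the alternating sum of connecting maps between successive quotients induced by the indecomposable morphisms in $\I$. Since these morphisms are precisely the inclusions and the contraction maps of Definition \ref{DomK3-4}, $d_1$ is the simplicial differential associated to the functor $J \mapsto \BC(w_J)$, as claimed.

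For the $E_2$-identification, Theorem \ref{DomK2-11a} identifies each $\Ho_{\SU(r)}^\ast(\BC(w_J))$ with the Hochschild homology $\HH_\ast(\Ho_{\SU(r)}^\ast(\BSa(w_J));\, \Ho_T^\ast)$ of the Bott--Samelson Soergel bimodule for $w_J$, and the functoriality of this identification under inclusions (Theorem \ref{DomK2-11}) matches the $d_1$-differential with the Khovanov--Rouquier differential on the Koszul--Hochschild complex of Bott--Samelson Soergel bimodules for $w_I$ (compare Remark \ref{DomK2-11b}). Khovanov's theorem in \cite{K} then identifies the cohomology of this complex with an unreduced, integral form of triply-graded link homology. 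For the unknot, the braid word is empty so the filtration collapses, $\sBCU(L)$ reduces to the Borel construction on $\SU(1)$ acting on itself by conjugation, and one reads off $\Ho_{\SU(1)}^\ast(\SU(1)) = \Z[x] \otimes \Lambda(y)$.

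Finally, the invariance of $E_q(L)$ for $q \geq 2$ follows from Theorem 1.3: the filtered spectrum $\sBC(L)$ is well-defined up to the quasi-equivalence of (\cite{Ki1} definition 3.4), and by design a quasi-equivalence of filtered spectra induces an isomorphism on spectral sequences from the $E_2$-term onwards (\cite{Ki1} remark 3.5). I expect the main obstacle to be the careful bookkeeping of signs and Thom-suspension shifts arising from negative braid generators in Definition \ref{DomK3-4}, so as to guarantee that the topological $d_1$ extracted here agrees on the nose with Khovanov's differential on the Rouquier complex rather than merely up to an overall choice of normalization.
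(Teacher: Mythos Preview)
Your setup of the spectral sequence, the identification of the $E_1$-page and $d_1$, and the appeal to Theorem \ref{DomK2-11a} together with Khovanov's result for the $E_2$-identification are all in line with the paper. The genuine gap is in your last paragraph on invariance.

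You invoke the quasi-equivalence of $\sBC(L)$ from \cite{Ki1} and assert that a quasi-equivalence automatically induces an isomorphism of spectral sequences from $E_2$ onward. In the paper's framework this is \emph{not} automatic for an arbitrary equivariant cohomology theory: one must verify that Borel equivariant singular cohomology is an INS-type theory in the sense of (\cite{Ki1} definition 8.6), i.e.\ that it satisfies the $\mbox{B}$-type and $\mbox{M2a/b}$-type conditions. The $\mbox{M2a/b}$-type conditions (behaviour under the second Markov move) are checked directly from the restriction formulas of Theorem \ref{DomK2-11}. The $\mbox{B}$-type condition, flagged in (\cite{Ki1} remark 6.5), is the heart of the matter: one must show that for consecutive indices $j=i+1<r$ and $m=2$, the fiber inclusion $\Gr_t Z_m \to \Gr_t \sBS^{(i,j,m)}(w_I)$ is surjective in $\Ho_{\SU(r)}^\ast$. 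The paper accomplishes this via a separate claim analyzing a diagram of cofiber sequences involving the broken Schubert spectra $\BS^{(i,j,3)}(w_I)$ and showing, using the explicit Schubert class description of Theorems \ref{DomK2-10} and \ref{DomK2-11} together with the recursion $[\alpha_j]_{k+2} = [\alpha_j]_{k+1} - \delta_{i_{k+1}}$, that the relevant map surjects onto the ideal generated by $\delta_{k+1}$.

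Without this verification your argument does not close: the quasi-equivalences constructed in \cite{Ki1} pass through auxiliary filtered objects (the broken Schubert spectra), and the zig-zags only induce $E_2$-isomorphisms once the $\mbox{B}$-type surjectivity is known for the theory at hand. Your remark about ``bookkeeping of signs and Thom-suspension shifts'' is a secondary concern; the missing ingredient is this cohomological surjectivity computation.
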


\begin{proof}
In order to prove this theorem, we will require some detailed structure about the cohomology of Schubert varieties and their Hoshschild homology. These results can be found in the Appendix \ref{Appendix}. By (\cite{Ki1} theorem 8.7), in order to prove the above theorem we need to verify two sets of conditions called the $\mbox{B}$ and $\mbox{M2a/b}$-type conditions given in (\cite{Ki1} definition 8.6). The $\mbox{M2a/b}$-type conditions pertains to the behaviour under the second Markov move and comes down to verifying that the maps given in (\cite{Ki1} theorems 7.1 and 7.2) are both trivial. This follows easily from the behavior of the cohomology of the equivariant spectra $\sBC(w_I)$ under inclusion of subsets $J \subseteq I$ as described in theorem \ref{DomK3-23}. The $\mbox{B}$-type condition is the one flagged in (\cite{Ki1} remark 6.5). We consider it below. 

\medskip
\noindent
In \cite{Ki1}, we considered a certain quasi-equivalence of filtered $\SU(r)$-spectra (details will be given momentarily)
\[ \pi_m : \sBS^{(i,j,2)}(w_I) \longrightarrow \pi_m^\ast \sBS^{(i,j,m+1)}(w_I),\]
where $i,j$ are indices with $1 < m < m_{i,j}$ where $m_{ij}$ are the exponents in the Artin braid group for the Lie group $G$ in question. In the case in hand, $G=\SU(r)$ and so the only nontrivial exponents are $m_{ij} = 3$. 

\medskip
\noindent
Let $Z_m$ denote the fiber of $\pi_m$. Then the relevant $\mbox{B}$-type condition demands that the fiber inclusion map on the associated graded
\[ \Gr_t Z_m \longrightarrow \Gr_t \sBS^{(i,j,m)}(w_I) \]
be surjective in Borel equivariant singular cohomology. The only indices that satisfy the above parameters in the case of $\SU(r)$ are consecutive indices $j=i+1 < r$ and with $m=2$. 

\medskip
\noindent
Now the spectra $\Gr_r \sBS^{(i,i+1,2)}(w_I)$ and $\Gr_r \sBS^{(i,i+1,3)}(w_I)$ are coproducts of other $\SU(r)$-spectra indexed on the same set, and so the relevant condition comes down to verifying a condition on the individual summands. 

\medskip
\noindent
In order to describe the above objects, let us recall the definition of broken Schubert spectra (\cite{Ki1} definition 5.6). 

\medskip
\noindent
Given indices $i, j < r$, let $\Sh_{iji}$ denote the $T \times T$-space given by the pullback diagram
\[
\xymatrix{
\Sh_{iji}   \ar[d] \ar[r]^{\rho_{\Sh}} &  \SU(r) \ar[d] \\
\mathcal{X}_{iji}     \ar[r] & \SU(r)/T, 
}
\]
where $\mathcal{X}_{iji}$ is the image if the following canonical map under group multiplication

\[ \mathcal{X}_{iji} = \mbox{Image of} \, \, \, \,  G_{i} \times_T G_j \times_T G_i/T \longrightarrow \SU(r)/T. \]

\noindent
Notice that $\Sh_{iji}$ is a $T \times T$-invariant subspace of $\SU(r)$, where $T \times T$ acts on $G$ via left/right multiplication. Given any positive indexing sequence of the form $I = \{ i_1, \ldots, i_k, i, j, i \}$, we may construct the spectrum of broken Schubert spectra defined as the suspension spectrum of the space

\[ \BS^{(i,j,3)}(w_I) := \SU(r) \times_T (G_{i_1} \times_T \cdots \times_T G_{i_k} \times_T \Sh_{iji}), \]

\noindent
with the $T$-action on $G_{i_1} \times_T \cdots \times_T G_{i_k} \times_T \Sh_{iji}$ being endpoint conjugation as before. Similarly, we may define the broken Schubert spectra $\BS^{(i,j,2)}(w_I)$, which happen to agree with the spectra of broken symmetries. We now prove a claim that is the heart of the argument that will feed directly into the proof of theorem \ref{DomK2-11c}.

\medskip
\begin{claim} \label{DomK3-13}
Let $i$ and $j$ be indices with $j=i+1 < r$. Given a positive sequence $J = \{ i_1, \ldots, i_k \}$, consider positive indexing sequences
\[ I' = \{i_1, \ldots, i_k, i \}, \quad I'' = \{i_1, \ldots, i_k, i, i \}, \quad I = \{i_1, \ldots, i_{k+3} \} := \{i_1, \ldots, i_k, i,j,i \} \]
so that $J \subset I'$ and $J \subset I'' \subset I$ in the obvious fashion. Then one has a diagram of cofiber sequences of $\SU(r)$-equivariant spectra, which is functorial in $J$ and so that the map $f$ is an equivalence
\[
\xymatrix{
Z_{I''} \ar[r]^{\iota_{I''} \quad} \ar[d]^f & \BC(w_{I''}) \ar[d]^g \ar[r]^{\mu_1} &\BC(w_{I'})  \ar[d] \\
Z_I \ar[r]^{\iota_I \quad } & \BC(w_I) \ar[r]^{\mu \quad} &\BS^{(i,j,3)}(w_I)  
}
\]
where $\mu_1 :  \BC(w_{I''}) \longrightarrow \BC(w_{I'})$ is defined by the multiplication in the last two factors. Furthermore, the maps $\iota_I$ and $\iota_{I''}$ are surjective in Borel equivariant singular cohomology. 
\end{claim}
\begin{proof}
The existence of the commutative square in the right, and its functoriality in $J$ follows from the definition of the spaces in question. Furthermore, by (\cite{Ki1} lemma 5.7), the spaces in the right square form a pushout, and consequently the map $f$ is an equivalence. 

\medskip
\noindent
Now notice that the map $\mu_1$ admits a section and is therefore injective in any cohomology theory. To prove our claim, it is sufficient to show that $\mu$ is also injective in Borel equivariant cohomology. Consider the special case when $J$ is the empty sequence. In that case the map fibers as a $T$-fibration over the map
\[ \overline{\mu} : \BSa(w_{iji}) \longrightarrow \SU(r) \times_T \mathcal{X}_{iji}. \]
In cohomology $\overline{\mu}$ gives rise to a map of $\Ho_T^\ast$-bimodules 
\begin{equation} \label{pushout} \overline{\mu}^\ast : \Ho_{T}(\mathcal{X}_{iji}) \longrightarrow \Ho_{\SU(r)}^\ast(\BSa(w_{iji})), \end{equation}
so that the induced map on Hochschild homology can be identified with $\mu^\ast$. In order to recover $\mu^\ast$ for general sequences $J$, one simply tensors the map $\overline{\mu}^\ast$ on the left with $\Ho_{\SU(r)}^\ast(\BC(w_J))$ before taking Hochschild homology. The upshot of this observation is that in order to show that $\mu^\ast$ is injective in Borel equivariant cohomology, it is sufficient to show that the map (\ref{pushout}) splits as a map of $\Ho_T^\ast$-bimodules. This is what we now show. 

\medskip
\noindent
We begin by observing that there is a pair of pullback diagrams with fiber $G_i/T$
\[
\xymatrix{
 \ESU(r) \times_{\SU(r)} \BSa(w_{iji})   \ar[d] \ar[r]^{\quad \overline{\mu}} & \ESU(r) \times_{T} \mathcal{X}_{iji} \ar[d]^{\pi} \ar[r] & \ESUrT \ar[d] \\
   \ESU(r) \times_{\SU(r)} \BSa(w_{ij})  \ar[r]^{\quad \kappa} &  \ESU(r) \times_{T} \mathcal{Y}_{ij} \ar[r] & \ESUrGi, 
}
\]
where $\mathcal{Y}_{ij}$ are the Schubert varieties in $\SU(r)/G_i$ given by the image of $\mathcal{X}_{iji}$ in $\SU(r)/G_i$. The Eilenberg-Moore spectral sequence for both pullbacks collapses since the top row is free over the bottom in cohomology. In particular, we notice that one gets: 
\[ \Ho_{\SU(r)}^\ast(\BSa(w_{iji})) = \Ho_{\SU(r)}^\ast(\BSa(w_{ij})) \otimes_{\Ho_T^\ast(\mathcal{Y}_{ij} )} \Ho_T^\ast(\mathcal{X}_{iji}). \]
Let us recall \cite{Ku} that the left $\Ho_T^\ast$-module $\Ho_T^\ast(\mathcal{X}_{iji})$ has a Schubert basis indexed by elements in the subgroup isomorphic to $\Sigma_3$ in the Weyl group $\Sigma_r$, generated by reflections $\sigma_i$ and $\sigma_j$. We use $\delta_{i,k}$ and $\delta_{j,k}$ to denote the basis elements in degree $2k$ indexed by the Weyl elements of Bruhat length $k$ that have a reduced expression ending with the reflections $\sigma_i$ and $\sigma_j$ respectively. We will denote $\delta_{i,1}$ and $\delta_{j,1}$ by $\delta_i$ and $\delta_j$ respectively. Similarly, the left $\Ho_T^\ast$-module $\Ho_T^\ast(\mathcal{Y}_{ij})$ has a basis $\delta_{j,k}$ indexed by Weyl elements of length $k$ that have a reduced expression ending with $\sigma_j$. These basis elements are compatible under $\pi^\ast$. 

\medskip
\noindent
In order to construct a splitting to $\overline{\mu}^\ast$ as bimodules, it is sufficient to construct a splitting of the map $\kappa$ in cohomology, in the category of modules over $\Ho_T^\ast(\mathcal{Y}_{ij})$.

\medskip
\noindent
Notice that we have a lift to $\kappa$ induced by inclusions of Schubert varieties
\[ \tilde{\kappa} :   \ESU(r) \times_{\SU(r)} \BSa(w_{ij}) \longrightarrow  \ESU(r) \times_{T} \mathcal{X}_{iji}. \]
It will therefore be more convenient for us to work in the cohomology ring $\Ho_T^\ast(\mathcal{X}_{iji})$ in order to make computations before restricting to $\Ho_{\SU(r)}^\ast(\BSa(w_{ij}))$ along this section. 

\medskip
\noindent
Since $\mathcal{X}_{iji}$ is the full flag variety for $\SU(3)$, standard formulas for Schubert multiplication, (see \cite[11.3.17]{Ku}) show that exists an element $c \in \Ho_T^2$ that satisfies the following relation among the Schubert basis elements: 
\[ \delta_j  \delta_i =  \delta_{i, 2} + \delta_{j, 2}, \quad \quad \delta_j \delta_j = \delta_{j, 2} + c \delta_j.  \]
Now consider the class $e \in \Ho_T^2(\mathcal{X}_{iji})$ given by: 
\[ e := \delta_i - \delta_j + c. \] 
It is easy to see that the restriction of $e$ to $\Ho_{\SU(r)}^\ast(\BSa(w_{ij}))$ along $\overline{\mu}$ generates a free $\Ho_T^\ast$-submodule complementary to the image of $\Ho_T^\ast(\mathcal{Y}_{ij})$. It remains to show that this sub-module is closed under multiplication with $\Ho_T^\ast(\mathcal{Y}_{ij})$. 

\smallskip
\noindent
Now, by using the multiplication rules for the Schubert basis, we observe that $e \delta_j = \delta_{i, 2} $. In particular, $e \delta_j$ is zero in $\Ho_{\SU(r)}^\ast(\BSa(w_{ij}))$, since $\delta_{i,2}$ restricts to zero along $\tilde{\kappa}$. Similarly, recalling the relation $\delta_j \delta_j = \delta_{j, 2} + c \delta_j$, it also follows that $e \delta_{j, 2}$ restricts to zero. The above argument shows that the $\Ho_T^\ast(\mathcal{Y}_{ij})$-submodule in $\Ho_{\SU(r)}^\ast(\BSa(w_{ij}))$ generated by the restriction of the class $e$ is a summand that is a complement to $\Ho_T^\ast(\mathcal{Y}_{ij})$. This is what we wanted to prove. 
\end{proof}

\medskip
\noindent
Let us now complete the proof of \ref{DomK2-11c}. As mentioned earlier, the proof follows once we have verified the $\mbox{B}$-type condition of (\cite{Ki1} definition 8.6) which comes down to showing that the map in (\cite{Ki1} claim 6.5) is injective. This is equivalent to claim \ref{DomK3-13} above with the cosmetic difference that one must allow the subsequence $\{i, j, i \}$ anywhere in the sequence $I$ and not just at the terminating three spots. This is not a problem since we can always invoke the first Markov property to move the subsequence to the end. 
\end{proof}

\section{$\LSU(r)$-equivariant lifts of Strict Broken Symmetries and Twistings} \label{LiftsSBS}

\medskip
\noindent
In this section we would like to describe the framework that allows us to twist equivariant cohomology theories by local systems on $\SU(r)$. In order to make this statement precise, we will first construct $\LSU(r)$-equivariant lifts of our spaces of broken symmetries. 

\medskip
\noindent
Consider the homomorphism that evaluates a loop at the point $1 \in S^1$. 
\[ \Ev : \LSU(r) \longrightarrow \SU(r), \quad \Ev(\varphi) = \varphi(1). \]
The homomorphism $\Ev$ allows us to descend from $\LSU(r)$-spaces to $\SU(r)$-spaces by induction. Namely, given a pointed $\LSU(r)$-space $Y$, we may descend to an $\SU(r)$-space $Y_{\Ev}$ defined as $Y_{\Ev} := Y \wedge_{\LSU(r)} \SU(r)_+$. Applying this construction to the $\LSU(r)$-space $A_r$ of principal $\SU(r)$-connections on the trivial $\SU(r)$-bundle over the circle $S^1$, we notice that the induced map may be identified with the holonomy map
\[ \mbox{Hol} : A_r \longrightarrow (A_r)_{\Ev} \cong \SU(r), \]
with the induced $\SU(r)$-action on $\SU(r)$ being the conjugation action. In fact, the holonomy map above is a principal $\OSU(r)$-bundle and gives rise to an equivalence of stacks between $A_r \qu \LSU(r)$ and $\SU(r) \qu \SU(r)$. 

\medskip
\noindent
The above example also suggests a way one may reverse the procedure by starting with an $\SU(r)$-space $X$ endowed with an equivariant map, with $\SU(r)$ acting on itself by conjugation
\[ \rho_X : X \longrightarrow \SU(r), \]
and define $X^{\Ev}$ by pulling back the holonomy map along $\rho_X$. Since $\LSU(r)$ is generated by the groups $\SU(r)$ and $\OSU(r)$, both of which act on $X^{\Ev}$ (by the naturality of the pullback construction), we see that $X^{\Ev}$ is an $\LSU(r)$-space.

\medskip
\noindent
We now describe a proper $\LSU(r)$-action on a space $\gA_r$ that is a smaller model for the space $A_r$ of principal $\SU(r)$-connections on a circle. First let $\gSA_r$ denote the space of principal $\SSU(r)$-connections on the circle. Let $\ZSU(r)$ denote the center of $\SU(r)$. Now notice that the canonical map $\SSU(r) \times \ZSU(r) \longrightarrow \SU(r)$ is an $r$-fold cover. We may therefore define the space underlying $\gA_r$ to be 
\[ \gA_r := \gSA_r \times \R. \]
We would now like to endow $\gA_r$ with a proper action of $\LSU(r)$. We first choose an appropriate model for $\LSU(r)$ of the form $\LSU(r) = \LSSU(r) \rtimes \LSU(1)$, where $\LSSU(r)$ is the subgroup of smooth loops with values in $\SSU(r)$, and for $\LSU(1)$ we take the smaller subgroup of 
Laurent polynomials with values in $\SU(1)$. 

\bigskip
\begin{claim} \label{DomK2-5}
With the above model for $\LSU(r)$, the action of $\LSSU(r)$ on $\gA_r = \gSA_r \times \R$ extends to an action of $\LSU(r)$. 
\end{claim}

\begin{proof}
To extend the action of $\LSSU(r)$ on $\gA_r$ to an action of $\LSU(r)$, we simply need to describe an action of $\LSU(1)$ that is compatible with the action of $\LSSU(r)$. Recall that we have chose a model so that $\LSU(1) = \SU(1) \times \Z$, where $\Z = \pi_1(\SU(1))$. 

\medskip
\noindent
Let us begin by fixing an isomorphism between $\SU(1)$ and a subtorus $\mbox{T}_r \subseteq \SSU(r) \cap T^r$ so that $\mbox{T}_r$ contains the center of $\SSU(r)$ (the final answer will be equivalent for all choices of $\mbox{T}_r$). As such, we may define the action of an element $z \in \SU(1)$ on $(\nabla, x) \in \gSA_r \times \R$ by
\[ z \ast (\nabla, x) = (\mbox{Ad}_{z^{\frac{1}{r}}}(\nabla), x). \]
It is easy to check that this action is well defined. Similarly, given a generator $\sigma \in \pi_1(\SU(1))$ seen as an element in $\LSU(1)$, the action of $\sigma$ on the element $(\nabla, x)$ is given by
\[ \sigma \ast (\nabla, x) = \big{(}\mbox{Ad}_{\sigma^{\frac{1}{r}}}(\nabla) + \frac{1}{r}d\theta, \frac{1}{r} + x\big{)}, \]
where $\theta$ represents the fundamental one-form on the circle with values in the Lie algebra of the subtorus $\mbox{T}_r \subseteq \SSU(r)$ and $\sigma^{\frac{1}{r}}$ denotes any pointwise $r$-th root of $\sigma \in \LSU(1)$. As before, it is easy to check that this formula is well defined and that it describes an action of our chosen model for $\LSU(r)$ extending the action of $\LSSU(r)$. 
\end{proof}

\medskip
\begin{claim} \label{DomK2-7}
The space $\gA_r$ is a proper $\LSU(r)$-space with a free $\OSU(r)$-action. Furthermore, the induced $\SU(r)$-space ${\gA_r}_+ \wedge_{\LSU(r)} \SU(r)_+$ is equivalent to the conjugation action of $\SU(r)$ on itself. 
\end{claim}

\begin{proof}
For any simply connected, compact Lie group $G$, it is well-known \cite{KM} that the space of principal $G$-connections $A$ over a circle is a proper $\LG$-space, where $\LgG$ denotes the Loop group of $G$. The pointed gauge group $\Omega(G)$ is known to act freely on the space of connections $A$. Furthermore, the holonomy map establishes an equivalence between the induced space $A_+ \wedge_{\LgG} G_+$ and the conjugation action of $G$ on itself. Applying this to our example $\gA_r = \gSA_r \times \R$, it follows that the pointed loop group $\OSU(r) := \OSSU(r) \rtimes \Z$ acts freely on $\gA_r$, with the orbit space being $\SSU(r) \times_{\Z/r\Z} (\R/\Z)$. Identifying this orbit space with $\SU(r)$, it is straigforward to see that the residual action of $\SU(r)$ on this orbit space is equivalent to the conjugation action. Hence, the stack $\gA_r \qu \LSU(r)$ is a proper $\LSU(r)$-space equivalent to $\SU(r) \qu \SU(r)$. 
\end{proof}

\medskip
\noindent
We consolidate the previous claims into the following definition

\medskip
\begin{defn} (The universal proper $\LSU(r)$-space $\gA_r$) \label{DomK2-8}

\noindent
Taking the small model for $\LSU(r)$, we define the universal proper $\LSU(r)$-space to be the space $\gA_r$. The subgroup $\OSU(r) := \OSSU(r) \rtimes \Z$ acts freely on $\gA_r$ so that there is principal $\OSU(r)$-fibration defined as the ``holonomy' map 
\[ \mbox{Hol} : \gA_r \longrightarrow \SU(r). \]
Furthermore, the induced space ${\gA_r}_+ \wedge_{\LSU(r)} \SU(r)_+$ is equivalent to the conjugation action of $\SU(r)$ on itself. 
\end{defn}

\medskip
\begin{remark}
As the definition suggests, it is in fact true that $\gA_r$ is the terminal proper $\LSU(r)$-space (up to equivariant homotopy), even though we don't really need that fact. 
\end{remark}

\medskip
\noindent
Now let $I = \{ i_1, i_2, \ldots, i_k \}$ denote an indexing sequence with $i_j < r$. Let $T$ denote the standard maximal torus, and let $G_i$ denote the unitary (block-diagonal) form in the reductive Levi subgroup having roots $\pm \alpha_i$. We consider $G_i$ as a two-sided $T$-space under the canonical left(resp. right) multiplication. For the (positive) braid word $w_I$, recall the spaces $\BC(w_I)$ of broken symmetries  
\[ \BC(w_I) := \SU(r) \times_T ({G_{i_1}} \times_T {G_{i_2}} \times_T \cdots \times_T {G_{i_k}}) = \SU(r) \times_T \BC_T(w_I) , \]
with the $T$-action on $\BC_T(w_I) := (G_{i_1} \times_T G_{i_2} \times_T \cdots \times_T G_{i_k})$ given by endpoint conjugation
\[ t \, [(g_1, g_2, \cdots, g_{k-1}, g_k)] := [(tg_1, g_2, \cdots, g_{k-1}, g_kt^{-1})]. \]

\medskip
\begin{defn} (Lifts of the spectra $\BC(w_I)$ to $\LSU(r)$-spectra) \label{Twist-2}

\noindent
Given a positive braid word $w_I$, let $\rho_I$ denote the canonical $\SU(r)$-equivariant map on $\BC(w_I)$ induced by group multiplication on the factors and with values in the space $\SU(r)$ acting on itself by conjugation 
\[ \rho_I :  \BC(w_I) = \SU(r) \times_T ({G_{i_1}} \times_T {G_{i_2}} \times_T \cdots \times_T {G_{i_k}}) \longrightarrow \SU(r), \quad [(g, g_{i_1}, \ldots ,g_{i_k})] \mapsto gg_{i_1}\ldots g_{i_k}g^{-1}. \]
We define the space $\BC(w_I)^{\Ev}$ to be the pullback of the universal proper $\LSU(r)$-space $\gA_r$ along $\rho_I$, and denote $\mbox{Hol}_I : \BC(w_I)^{\Ev} \longrightarrow \BC(w_I)$ to be the induced holonomy map. Note that we may identify $\BC(w_I)$ with $\BC(w_I)^{\Ev}_+ \wedge_{\LSU(r)} \SU(r)_+$. 

\medskip
\noindent
Given an arbitrary indexing sequence $I = \{ \epsilon_{i_1} i_1, \cdots, \epsilon_{i_k} i_k \}$, we define the $\LSU(r)$-equivariant spectrum of broken symmetries $\BC(w_I)^{\Ev}$ as the Thom spectrum of the pullback of $-\zeta_I$ (section \ref{DomK2})
\[ \BC(w_I)^{\Ev} := (\BC(w_{\underline{I}})^{\Ev})^{-\zeta_I}, \]
where $\underline{I}$ is the indexing sequence obtained from $I$ by replacing each $\epsilon_j$ with $1$. The pullback is performed along $\mbox{Hol}_{\underline{I}} : \BC(w_{\underline{I}})^{\Ev} \longrightarrow \BC(w_{\underline{I}})$
\end{defn}

\medskip
\noindent
In order to extend the above definition to the filtered spectrum of strict broken symmetries, we will need to reconsider a $\LSU(r)$-equivariant variant of the Pontrjagin-Thom construction (\cite{Ki1} claim 2.5).  

\medskip
\begin{claim} \label{DomK3-3}
If the set $J$ is obtained from $I$ by dropping an entry $-i_j$, then the Pontrjagin-Thom construction induces a canonical $\LSU(r)$-equivariant map
\[ \pi_{i_j}^{\Ev} : \BC(w_I)^{\Ev} \longrightarrow \BC(w_J)^{\Ev}. \] 
\end{claim}
\begin{proof}
The Pontrjagin-Thom construction that gives rise to the map $\pi_{i_j}^{\Ev}$ is performed at the level of the spaces $\BC(w_{\underline{I}})^{\Ev}$. In order to study this, recall the $\SU(r)$-equivariant principal $\OSU(r)$-bundle
\[ \mbox{Hol} : \gA_r \longrightarrow \SU(r). \]
One may pick a $\SU(r)$-invariant connection $\nabla^{\Ev}$ on the bundle $\mbox{Hol}$ by first taking any connection and then averaging over $\SU(r)$ to get an invariant connection (in fact, there is even a canonical choice for such a connection). The connection $\nabla^{\Ev}$ induces compatible connections $\nabla_{\underline{I}}^{\Ev}$ on all $\SU(r)$-equivariant principal $\OSU(r)$-bundles $\mbox{Hol}_{\underline{I}} : \BC(w_{\underline{I}})^{\Ev} \longrightarrow \BC(w_{\underline{I}})$. 

\medskip
\noindent
Now consider the special case of the above claim where $I$ has a single entry $-i_j$ for which $\epsilon_{i_j} = -1$, and so that $J$ is the set obtained by dropping $-i_j$. It is straightforward to see that the restriction of $\zeta_{i_j}$ to $\BC(w_J)$ is the normal bundle of the canonical inclusion $\BC(w_{J}) \subset \BC(w_{\underline{I}})$. Let $\eta_J$ denote the $\SU(r)$-equivariant tubular neighborhood of $\BC(w_J)$ in $\BC(w_{\underline{I}})$ identified with $\zeta_{i_j}$ via the exponential map. Parallel transport via the connection $\nabla_{\underline{I}}^{\Ev}$ along linear paths in in the neighborhood $\zeta_{i_j}$ allows us to canonically identify a $\LSU(r)$-equivariant tubular neighborhood of the inclusion $\BC(w_J)^{\Ev} \subset \BC(w_{\underline{I}})^{\Ev}$ with the pullback of $\zeta_{i_j}$ along $\mbox{Hol}_J$. We may therefore perform the Pontrjagin-Thom construction to get a map $\BC(w_{\underline{I}})^{\Ev} \longrightarrow (\BC(w_J)^{\Ev})^{\zeta_{i_j}}$. 
Twisting with $-\zeta_{i_j}$ gives rise to the expected map
\[ \pi^{\Ev}_{i_j} : \BC(w_I)^{\Ev} \longrightarrow \BC(w_J)^{\Ev}. \]

\noindent
From this special case, it is straightforward to deduce the general case since the remaining bundles do not interfere with the Pontrjagin-Thom construction for $i_j$. 
\end{proof}

\bigskip
\noindent
\begin{defn} (The functor $\BC(w_I)^{\Ev}$, compare \cite{Ki1} defintion 2.6) \label{DomK3-4}

\noindent
Given a braid word $w_{I}$, for $I = \{ \epsilon_{i_1} i_1, \cdots, \epsilon_{i_k} i_k \}$, let $2^{I}$ denote the set of all subsets of $I$. Let us define a poset structure on $2^{I}$ generated by demanding that nontrivial indecomposable morphisms $J \rightarrow K$ have the form where either $J$ is obtained from $K$ by dropping an entry $i_j \in K$ (i.e. an entry for which $\epsilon_{i_j} = 1$), or that $K$ is obtained from $J$ by dropping an entry $-i_j$ (i.e an entry for which $\epsilon_{i_j} = -1$). 

\medskip
\noindent
The construction $\BC(w_{J}^{\Ev})$ induces a functor from the category $2^{I}$ to $\LSU(r)$-spectra. More precisely, given a nontrivial indecomposable morphism $J \rightarrow K$ obtained by dropping $-i_j$ from $J$, the induced map $\BC(w_{J})^{\Ev} \rightarrow \BC(w_{K})^{\Ev}$ is obtained by applying the map $\pi^{\Ev}_{i_j}$ of claim \ref{DomK3-3}. Likewise, if $J$ is obtained from $K$ by dropping the factor $i_j$, then the map $\BC(w_{J})^{\Ev} \rightarrow \BC(w_{K})^{\Ev}$ is defined as the canonical inclusion. 
\end{defn}

\bigskip
\begin{defn} \label{DomK3-5} ($\LSU(r)$-equivariant strict broken symmetries, compare \cite{Ki1} definitions 2.7, 2.8) 

\noindent
let $I^+ \subseteq I$ denote the terminal object of $2^{I}$ given by dropping all terms $-i_j$ from $I$ (i.e terms for which $\epsilon_{i_j} = -1$). Define the poset category $\I$ to the subcategory of $2^{I}$ given by removing $I^+$. 
\[ \I = \{ J \in 2^{I}, \,  J \neq I^+ \}. \]

\noindent
We first define the equivariant $\LSU(r)$-spectrum $\sBCU(w_{I})^{\Ev}$ via the cofiber sequence of equivariant $\LSU(r)$-spectra
\[ \hocolim_{J \in \I} \BC(w_{J})^{\Ev} \longrightarrow \BC(w_{I^+})^{\Ev} \longrightarrow \sBCU(w_{I})^{\Ev}. \]
We endow $\sBCU(w_{I})^{\Ev}$ with a natural filtration as $\LSU(r)$-spectra giving rise to the filtered spectrum of strict broken symmetries $\sBC(w_I)^{\Ev}$ as follows. The lowest filtration is defined as
\[ F_0 \, \sBC(w_{I})^{\Ev} = \BC(w_{I^+})^{\Ev}, \quad \mbox{and} \quad F_k \, \sBC(w_{I})^{\Ev} = \ast, \quad \mbox{for} \quad k < 0. \]
Higher filtrations $F_t$ for $t>0$ are defined as the cone on the restriction of $\pi$ to the subcategory $\I^t \subseteq \I$ consisting of objects no more than $t$ nontrivial composable morphisms away from $I^+$. In other words $F_t \, \sBC(w_{I})^{\Ev}$ is defined via the cofiber sequence
\[ \hocolim_{J \in \I^t} \BC(w_{J})^{\Ev} \longrightarrow \BC(w_{I^+})^{\Ev} \longrightarrow F_t \, \sBC(w_{I})^{\Ev}. \]
\end{defn}

\medskip
\begin{remark} \label{DomK3-6}
As before, it is straightforward to see that the associated graded of this filtration is given by 
\[ \Gr_t(\sBC(w_J)^{\Ev}) =  \bigvee_{{J} \in \I^t/\I^{t-1} } \BC(w_{J})^{\Ev}. \]
\end{remark}

\noindent
Having constructed the filtered $\LSU(r)$-equivariant homotopy type $\sBC(w_J)^{\Ev}$, we now proceed to show that this homotopy type is independent of the presentation $w_I$, up to quasi-equivalence, and can be normalized to give rise to an invariant of the link $L$ obtained by closing the braid $w_I$. The proof of this fact follows formally from the proofs given in sections 4, 5, 6 and 7 of \cite{Ki1}. One simply invokes the following technical claim that allow one to lift properties of broken symmetries over to their $\LSU(r)$-lifts. 

\medskip
\begin{claim} \label{DomK3-8}
Let $P$ be a pushout of $\SU(r)$-spaces $X$, $Y$ and $Z$ over the $\SU(r)$-space $\SU(r)$ as follows
\[
\xymatrix{
X \ar[d]^{g} \ar[r]^{h} & Z \ar[d] \\
Y    \ar[r]  &  P.
}
\]
Then the above diagram lifts to a pushout diagram of $\LSU(r)$-spaces over $\gA_r$
\[
\xymatrix{
X^{\Ev} \ar[d]^{g^{\Ev}} \ar[r]^{h^{\Ev}} & Z^{\Ev} \ar[d]  \\
Y^{\Ev}    \ar[r]  &  P^{\Ev}  }
\]
\end{claim}
\begin{proof}
By definition, we express $P$ as the quotient space of $Y \coprod Z$ under the relations indexed by $X$ that identify $g(x)$ with $h(x)$ for any point $x \in X$. Now notice that $P^{\Ev}$ is a principal $\OSU(r)$-bundle over $P$. As such we may express $P^{\Ev}$ as the quotient space of the induced $\OSU(r)$-bundles over $Y$ and $Z$, with identifications indexed by the induced bundle over $X$. This is precisely the content of the claim.
\end{proof}

\medskip
\noindent
It is now a simple matter of going through the statements in sections 4, 5, 6 and 7 in \cite{Ki1} sequentially to show why the same statements formally hold for the $\LSU(r)$-equivariant lifts. We sketch the details for the benefit of the interested reader. 

\medskip
\noindent
Let us begin by addressing why $\BC(w_I)^{\Ev}$ is independent of the presentation $w_I$. This involves two properties. Namely, invariance under the braid relations and invariance under the inverse relation. We start by indicating why $\sBC(w_I)^{\Ev}$ is invariant under the braid relations. 

\medskip
\noindent
In (\cite{Ki1} theorem 5.1), given a pair of indices $(i,j)$, we considered an indexing sequence $I^{(i,j)}$ which contains a subsequence of consecutive terms given by the braid sequence $\{ i, j, i, j, \ldots \}$ with $m_{i,j}$-terms. Then the filtered $\SU(r)$-spectrum of strict broken symmetries $\sBC(w_{I^{(i,j)}})$ was connected to the spectrum $\sBC(w_{I^{(j,i)}})$ by a zig-zag of elementary quasi-equivalences, where $I^{(j,i)}$ represents the same sequence with the subsequence $\{ i, j, i, j, \ldots \}$ replaced by the sequence $\{ j, i, j, i, \ldots \}$ with the same number of terms. The method of proof entailed constructing a sequence of filtered $\SU(r)$-spectra (\cite{Ki1} definition 5.6) known as strict broken Schubert spectra $\sBS^{(i,j,m)}(w_I)$ for integers $1 \leq m \leq m_{i,j}$. These filtered spectra were constructed as homotopy colimits of functors $\BS^{(i,j,m)}(w_J)$ over certain quotient poset categories. Furthermore, we had $\sBS^{(i,j,1)}(w_I) = \sBC(w_{I^{(i,j)}})$ (and the same for $(j,i)$), and that the terminal filtered spectra $\sBS^{(i,j,m_{i,j})}(w_I)$ and $\sBS^{(j,i,m_{i,j})}(w_I)$ at the end of each sequence agreed. In addition, we showed that all the filtered $\SU(r)$-spectra $\sBS^{(i,j,m)}(w_I)$ were connected by zig-zags of elementary quasi-equivalences. This quasi-equivalence was constructed by mean of a comparison map between broken Schubert spectra $\BS^{(i,j,m)}(w_J) \longrightarrow \BS^{(i,j,m+1)}(w_J)$ for subsets $J \subseteq I^{(i,j)}$ and $1 \leq m < m_{i,j}$. 

\bigskip
\noindent
Let us now proceed to demonstrate how the above argument extends to the $\LSU(r)$-lifts of these filtered spectra.

\bigskip
\noindent
Let $\underline{J}$ denote the set obtained by replacing all $\epsilon_j$ by $1$. From the definition of Schubert spectra, it is straightforward to check that the canonical map $\rho_J : \BC(w_{\underline{J}}) \longrightarrow \SU(r)$ of \ref{Twist-2} factors through the spectra $\BS^{(i,j,m)}(w_{\underline{J}})$ for all $1 < m < m_{i,j}$.
\[
\xymatrix{
\BC(w_{\underline{J}}) \ar[d]^{\rho_{\underline{J}}} \ar[r] & \BS^{(i,j,m)}(w_{\underline{J}})  \ar[d]^{\rho_{\underline{J}}^{(i,j,m)}} \ar[r]& \BS^{(i,j,m+1)}(w_{\underline{J}}) \ar[d]^{\rho^{(i,j,m+1)}_{\underline{J}}} \\
\SU(r) \ar[r]^{=} & \SU(r)  \ar[r]^{=} & \SU(r).
}
\]
It follows that the comparison maps lift to yield comparison maps of equivariant spectra
\begin{equation} \label{DomK3-9} 
 \BC^{(i,j,m)}(w_J)^{\Ev} \longrightarrow \BS^{(i,j,m+1)}(w_J)^{\Ev}
\end{equation}

\medskip
\noindent
The next step in the proof of braid invariance is to show that the fiber of the comparison map induces a zig-zag of maps of filtered spectra, with acyclic fibers. The key point in showing acyclicity comes down to showing that the fibers of a particular pair of maps of the form given in equation (\ref{DomK3-9}) are equivalent. These maps fit into a pushout diagram before taking the $\LSU(r)$-equivariant lift and so we may invoke claim \ref{DomK3-8} to observe that the equivalence of fibers remains true on taking the lift. The rest of the argument is formal. 

\bigskip
\noindent
We now move to invariance under the inverse relation. This relation involves showing that for any index $i$ and a sequence $I$ of the form $\{ \ldots, i, -i, \ldots \}$, the filtered $\LSU(r)$-spectrum $\BC(w_I)^{\Ev}$ admits an elementary quasi-equivalence with $\BC(w_I')^{\Ev}$, where $I'$ is obtained from $I$ by dropping the pair $\{ i, -i \}$. Of course, we require invariance under the pair $\{-i, i\}$ as well. 

\medskip
\noindent
The key construction in the argument given in (\cite{Ki1} section 6) involves splitting the two maps
\[ \BC(w_{I_1}) \longrightarrow \BC(w_I), \quad \BC(w_I) \longrightarrow \BC(w_{I_2}), \quad \mbox{where} \quad I_1 = I/\{i\}, \quad I_2 = I/\{-i\}. \]
It is straightforward to check that the splitting described in (\cite{Ki1} claims 6.2 and 6.3) are maps over $\SU(r)$, and can therefore be lifted to splittings of the maps
\[ \BC(w_{I_1})^{\Ev} \longrightarrow \BC(w_I)^{\Ev}, \quad \BC(w_I)^{\Ev} \longrightarrow \BC(w_{I_2})^{\Ev}. \]
Again, the rest of the argument is purely formal.

\bigskip
\noindent
Being done with showing that $\sBC(w_I)^{\Ev}$ is independent of presentation, let us now indicate why $\sBC(w_I)^{\Ev}$ is an invariant of links. This requires showing invariance under first Markov property, and the second Markov property. The $\LSU(r)$-equivariant versions of these are straightforward, given the $\SU(r)$-equivariant versions shown in (\cite{Ki1} section 4 and 7). The map $\tau$ (\cite{Ki1} theorem 4.2) that establishes invariance under the first Markov property is easily seen to be a map over the space $\SU(r)$ and therefore lifts to the $\LSU(r)$-equivariant strict broken symmetries. 

\medskip
\noindent
Similarly, for invariance under the second Markov property, one simply observes that the relevant cofibration sequences described in (\cite{Ki1} claim 7.3 and 7.5) admit canonical lifts. The above discussion allows us to define invariants of braids and links resp. $\sBC(w)^{\Ev}$ and $\sBC(L)^{\Ev}$ as in (\cite{Ki1} definitions 2.10 and 8.4). 

\bigskip
\begin{thm} \label{DomK3-12} 
Let $L$ denote a link described by the closure of a positive braid $w \in \Br(r)$ with $r$-strands, and let $w_I$ be a presentation of $w$ as $w_I = \sigma^{\epsilon_1}_{i_1} \ldots \sigma^{\epsilon_k}_{i_k}$, with $I = \{ \epsilon_1 i_1, \ldots, \epsilon_k i_k \}$. Let the $\LSU(r)$-equivariant normalized spectrum of strict broken symmetries $\sBC(L)^{\Ev}$ be defined as 
\[ \sNBC(L)^{\Ev} := \Sigma^{l(w_I)} \sBC(w_I)^{\Ev}[\varrho_I], \]
where $\Sigma^{l(w_I)}$ denotes the suspension by $l(w_I) := l_-(w_I) - 2l_+(w_I)$ with $l_+(w_I)$ being the number of positive and $l_-(w_I)$ being the number of negative exponents in the presentation $w_I$ for $w$ in terms of the generators $\sigma_i$. Also, $\sBC(w_I)^{\Ev}[\varrho_I]$ denotes the filtered spectrum $\sBC(w_I)^{\Ev}$ with a shift in indexing given by $F_t \, \sBC(w_I)^{\Ev}[\varrho_I] := F_{t+\varrho_I} \, \sBC(w_I)^{\Ev}$, with $\varrho_I$ being one-half the difference between the cardinality of the set $I$, $|I|$, and the minimum word length $|w|$, of $w \in \Br(r)$
\[ \varrho_I = \frac{1}{2} (|I| - |w|). \]
Then, as a function of the (framed) link $L$, the filtered $\LSU(r)$-spectrum $\sBC(L)^{\Ev}$ is well-defined up to quasi-equivalence. See \cite{Ki1} Section 8, for a discussion on the framed enhancement.
\end{thm}

\smallskip
\begin{remark} \label{DomK3-12a}
In (\cite{Ki1} theorem 6.8) we showed that $\sBC(L)$ was equivalent to $\sBC(L^R)$, where $R$ was a reflection symmetry that was induced by reversing the order of braids. This symmetry also lifts to a levelwise (honest) equivalence between $\sBC(L)^{\Ev}$ and $\sBC(L^R)^{\Ev}$ and corresponds to the automorphism of the space of connections induced by complex conjugation acting on $S^1$. 
\end{remark}

\smallskip
\begin{remark} \label{Galois4}
The holomomy maps $\rho_I$ are equivariant with respect to the Galois symmetry $\sigma$ defined in \cite{Ki1} (Section 8). In particular, $\sigma$ can be shown to lift to a Galois symmetry on the $\LSU(r)$-equivariant spectrum $\sBC(L)^{\Ev}$, with $\sigma$ acting on $\LSU(r)$ by pointwise complex conjugation. 
The action of $\sigma$ on the normalization factor $\Sigma^{l(w_I)}$ is as described in \cite{Ki1} (Remark 8.7). 
\end{remark}

\bigskip
\noindent
We now study how one might use the $\LSU(r)$-equivariant lift of strict broken symmetries to compute link invariants by applying suitable twisted equivariant cohomology theories. Towards this end, assume that $\E_{\SU(r)}$ is a family of $\SU(r)$-equivariant cohomology theories as studied in \cite{Ki1}. In particular, we assume natural compatiblity under restrictions 
\[ \Delta_j : \E_{\SU(r_1)} \wedge \E_{\SU(r_k)} \ldots \wedge \E_{\SU(r_m)} \llra{\cong} j^\ast \E_{\SU(m)}, \, \, \mbox{where} \, \, \,  j : \SU(r_1) \times \SU(r_2) \times \ldots \times \SU(r_k) \hookrightarrow \SU(m), \]
is an arbitrary injection of Lie groups. We now impose further requirements that $\E_{\SU(r)}$ is associative (i.e. $A_{\infty}$), and homotopy commutative with this multiplicative structure being preserved under the above equivalences.  We may take for instance, $\E_{\SU(r)}$ to be the restriction of a multiplicative Global cohomology theory \cite{Sc}. 

\medskip
\noindent
Now let $\Delta_{r,s}$ denote the equivalence induced by the diagonal $\iota_{r,s} : \SU(r) \times \SU(s) \longrightarrow \SU(t)$ for all pairs $(r,s)$ with $r+s = t$
\[  \Delta_{r,s} : \E_{\SU(r)} \wedge \E_{\SU(s)} \llra{\cong} \iota_{r,s}^\ast\E_{\SU(r+s)}. \]

\medskip
\begin{defn} (Twistings for the family of equivariant cohomology theories $\E$) \label{Twist-6a}

\noindent
Given a family of sufficiently multiplicative cohomology theories $\E$ as above, a twisting $\theta$ for $\E$ is a family $\theta_r$ of $\SU(r)$-equivariant action maps in the category of right $\E_{\SU(r)}$-module spectra
\[ \theta_r  :\OSU(r)_+ \wedge \E_{\SU(r)} \longrightarrow \E_{\SU(r)}, \]
where $\OSU(r)$ is a $\SU(r)$-space under conjugation. We demand that the action maps are $\SU(r) \times \SU(s)$-equivarianty compatible for all pairs $r+s=t$
\[
\xymatrix{
  (\OSU(r) \times \OSU(s))_+ \wedge \E_{\SU(r)} \wedge \E_{\SU(s)} \ar[r]^{\quad \quad \quad \iota \wedge \Delta} \ar[d]^{\theta_r \wedge \theta_s} & \OSU(t)_+ \wedge \iota_{r,s}^\ast \E_{\SU(t)} \ar[d]^{\theta_t}    \\
  \E_{\SU(r)} \wedge \E_{\SU(s)} \ar[r]^{\quad \quad \quad \Delta}  & \iota_{r,s}^ \ast \E_{\SU(t)}). 
}
\]
\end{defn}

\medskip
\noindent
In particular, a twist $\theta$ extends a $\SU(r)$-spectrum $\E_{\SU(r)}$ to a $\LSU(r)$-spectrum:

\medskip
\begin{defn} ($\LSU(r)$-equivariant spectra and $\LSU(r)$-equivariant cohomology theories) 

\noindent
An $\LSU(r)$-equivariant spectrum is a collection of $\LSU(r)$-spaces $\mbox{E}(V)$ indexed on complex representations $V$ in a complete $G$-universe for the group $G = \SU(r)$. Notice that these representations may be considered as representations of the loop groups $\LSU(r)$ via the evaluation map $\Ev : \LSU(r) \longrightarrow \SU(r)$. We require these spaces to be related by compatible collection of $\LSU(r)$-equivariant maps $ \mbox{E}(W) \longrightarrow \Omega^V \mbox{E}(W\oplus V)$ that are $\SU(r)$-equivariant equivalences. 

\medskip
\noindent
Even though we will be primarily interested in integer graded cohomology, an $\LSU(r)$-equivariant spectrum $\mbox{E}$ represents a $\SU(r)$-representation graded cohomology theory in the usual way: Given any $\LSU(r)$-equivariant spectrum $Z$, the value of the $\mbox{E}$-cohomology of $Z$ in grading given by a $\SU(r)$-representation $V$ is defined by 
\[ \mbox{E}^V(Z) := \pi_0 \Map^{\LSU(r)}(Z, \mbox{E}(V)). \]
We observe that a twisting $\theta$ on a family $\{ \E_{\SU(r)} \}$ of $\SU(r)$-equivariant cohomology theories allows for an extension of $\{ \E_{\SU(r)} \}$ to $\LSU(r)$-equivariant cohomology theories. 
\end{defn}

\medskip
\begin{remark} \label{units}
Given a twist $\theta$ as in definition \ref{Twist-6a}, consider the subspace $\GL(\E_{\SU(r)}) \subset \Omega^{\infty} \E_{\SU(r)}$ consisting of components whose underlying element in the ring $\pi_0(\E_\natural)$ is a unit, where $\E_\natural$ denotes the spectrum $\E_{\SU(r)}$ with the $\SU(r)$-action forgotten. $\GL(\E_{\SU(r)})$ is a group-like monoid with a $\SU(r)$-action. We see that $\theta_r$ restricts to a $\SU(r)$-equivariant map of monoids $\OSU(r) \longrightarrow \GL(\E_{\SU(r)})$ which we can classify by a $\SU(r)$-equivariant map (denoted by the same name)
\[ \theta_r : \SU(r) \longrightarrow \BGL(\E_{\SU(r)}). \]
Such a classification would allow us to recover the twist if we knew that the group completion map
\[ \GL(\E_{\SU(r)}) \longrightarrow \Omega \BGL(\E_{\SU(r)}) \] 
 was a $\SU(r)$-equivariant weak equivalence. Such an equivalence is not true in general, but would indeed be true if we were to make a technical assumption that for any compact subgroup $H \leq \SU(r)$, a homotopy class $x \in \pi_0^H(\E_{\SU(r)})$ is a unit if the underlying class $x \in \pi_0(\E_\natural)$ is a unit. 
\end{remark}

\medskip
\begin{defn} ($\Z$-graded twisted cohomology of the spectra of broken symmetries) \label{Twist-7}

\noindent
Given a twisting $\theta$ for the family $\E$, we define the twisted cohomology of the spectra $\BC(w_I)$ by 

\[ \sideset{^\theta}{_{\SU(r)}^k}\E(\BC(w_I)) := \pi_0 \Map^{\LSU(r)} (\BC(w_I)^{\Ev}, \Sigma^k \E_{\SU(r)}), \]

\noindent
where the mapping space $\Map^{\LSU(r)} (\BC(w_I)^{\Ev}, \Sigma^k \E_{\SU(r)})$ denotes the space of $\LSU(r)$-equivariant maps from $\BC(w_I)^{\Ev}$ to $\Sigma^k \E_{\SU(r)}$. Note that if $I$ is a positive indexing sequence, so that $\BC(w_I)$ is a $\SU(r)$-space, then $\sideset{^\theta}{_{\SU(r)}^\ast}\E(\BC(w_I))$ is a right module over the untwisted cohomology $\E_{\SU(r)}^\ast(\BC(w_I))$. 
\end{defn}

\medskip
\noindent
As a consequence of definition \ref{DomK3-5} we obtain

\medskip
\begin{thm} \label{Twist-7b}
Assume $\{  \E_{\SU(r)}, \theta_r, r \geq 1 \}$ is an INS-type twisted equivariant cohomology theory (see \cite{Ki1} definition 8.8 for INS-type theories). Then given a link $L$ described as a closure of a braid word $w_I$ on $r$-strands, the filtration on $\sBC(L)^{\Ev}$ gives rise to a cohomologically graded spectral sequence converging to the $\theta$-twisted $\E$-cohomology $\sideset{^\theta}{_{\SU(r)}^\ast}\E(\sBCU(L))$ and with $E_1$-term  
\[ E_1^{t,s} = \bigoplus_{J \in \I^t/\I^{t-1}}    \sideset{^\theta}{_{\SU(r)}^s}\E(\BC(w_J))  \, \, \Rightarrow \, \, \sideset{^\theta}{_{\SU(r)}^{s+t+l(w_I)}}\E(\sBCU(L)). \]
The differential $d_1$ is the canonical simplicial differential induced by the functor in (\cite{Ki1} definition 2.6). Furthermore, the terms $E_q(L,\theta)$ are invariants of the link $L$ for all $q \geq 2$, upto an indeterminacy given by an overall shift in bi-degree. 
\end{thm}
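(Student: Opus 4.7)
The plan is to obtain the spectral sequence as a formal consequence of the filtered-spectrum structure of Definition \ref{Twist-3}, and to deduce the invariance of the higher pages from the assumed INS-type property by invoking \cite{Ki1} theorem 8.7. The spectral sequence itself and the $E_1$-identification are essentially bookkeeping on the filtration; the invariance statement is where the INS-type hypothesis does its real work. First I would apply the twisted cohomology functor of Definition \ref{Twist-7} to the filtration $F_0 \hookrightarrow F_1 \hookrightarrow \cdots$ of Definition \ref{Twist-3}. For this to be well-defined one must check that the coequalizer $\BC(w_I)^{\Ev} \wedge_{\OSU(r)} \E_{\SU(r)}$ is natural in inclusions $J \subseteq I$ in the poset $2^I$; this follows from naturality of the holonomy map $\mbox{Hol}_I$ of Definition \ref{Twist-2} and of the $\OSU(r)$-action $\Omega\theta_r$ furnished by Definition \ref{Twist-6}. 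The cofiber sequences $F_{t-1} \to F_t \to F_t/F_{t-1}$ then produce an exact couple, hence a spectral sequence converging, after the shift $l(w_I)$ coming from the normalization $\sNBC(L) = \Sigma^{l(w_I)} \sBC(w_I)$ of Definition \ref{DomK3-5}, to $\sideset{^\theta}{_{\SU(r)}^\ast}\E(\sBCU(L))$.

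Next I would identify the $E_1$-page. By the very construction of the subcategories $\I^t \subseteq \I$ as those objects at most $t$ indecomposable morphisms away from $I^+$, the cofiber $F_t/F_{t-1}$ is naturally a wedge indexed by objects of $\I^t$ that do not already lie in $\I^{t-1}$; applying the twisted functor yields the direct sum $\bigoplus_{J \in \I^t/\I^{t-1}} \sideset{^\theta}{_{\SU(r)}^s}\E(\BC(w_J))$ displayed in the statement. The connecting homomorphism in the cofiber triangle $F_{t-1} \to F_t \to F_t/F_{t-1}$ reads off as the alternating sum of the arrows in the diagram of Definition \ref{DomK3-4}, which is precisely the simplicial differential $d_1$ asserted in the theorem. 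This part of the argument is parallel to the construction of the spectral sequence in Theorem \ref{DomK2-11c}, but with $\Ho_{\SU(r)}^\ast$ replaced by the twisted functor and with the $\LSU(r)$-lifts $\BC(w_J)^{\Ev}$ used in place of $\BC(w_J)$.

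The substantive content is invariance of $E_q(L,\theta)$ for $q \geq 2$, and this is where the real obstacle sits. The plan is to invoke \cite{Ki1} theorem 8.7 verbatim: that theorem upgrades a filtered equivariant-spectrum-valued link invariant into a spectral-sequence-valued invariant for any cohomology theory satisfying two conditions, namely the B-type condition controlling behavior under the braid relations (the analog of Claim \ref{DomK3-13}) and the M2a/b-type conditions controlling behavior under the second Markov move (the analog of the M2-step in the proof of Theorem \ref{DomK2-11c}). By hypothesis, $\sideset{^\theta}{_{\SU(r)}^s}\E$ is INS-type, so both conditions hold by fiat and the argument of \cite{Ki1} theorem 8.7 applies without modification. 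The serious work is therefore shifted entirely onto the task of verifying INS-typeness for concrete twisted theories of interest (such as the specific twists $\theta_\wp$ constructed earlier in this section); here, however, INS-typeness is the standing assumption, so the theorem follows.
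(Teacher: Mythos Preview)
Your proposal is correct and matches the paper's approach: the paper presents this theorem as ``a straightforward consequence of definition \ref{DomK3-5}'' with no further proof, and what you have written is exactly the unpacking of that phrase---the spectral sequence arises formally from the filtration, the $E_1$-identification is bookkeeping, and the invariance of $E_q$ for $q\geq 2$ is delegated to \cite{Ki1} theorem 8.7 via the standing INS-type hypothesis. You have simply made explicit what the paper leaves implicit.
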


\bigskip
\noindent
Verifying if a twisted theory $\{  \E_{\SU(r)}, \theta_r, r \geq 1 \}$ is an INS-type theory is often a nontrivial problem. In subsequent sections of this article, we will describe several examples of twisted theories and consider the question of showing that they are INS-type theories. 

\section{The theory $\Hh$ and its twist $\theta_\wp$} \label{HT}

\medskip
\noindent
Let us get to the main example of twisted spectra that interests us. Let $\Ho$ denote the (non-equivariant) Eilenberg-MacLane spectrum, and let $\BSU$ denote the infinite complex Grassmannian classifying vector bundles of virtual dimension zero, seen as an infinite loop space under the direct sum of virtual vector bundles. The infinite loop structure of $\BSU$ induces the structure of a commutative ring spectrum on $\Ho \wedge \BSU_+$. The homotopy groups of $\Ho \wedge \BSU_+$ are (by definition) the homology groups of $\BSU$, with an algebra structure induced by the $h$-space structure on $\BSU$. This algebra is a graded polynomial algebra on a collection of generators with one generator in each even degree. One may choose the $\Z$-module spanned by these generators to be the image of 
\[ \iota_\ast : \Ho_\ast(\BSU(1),\Z) \longrightarrow \Ho_\ast(\BSU, \Z), \]
where $\iota : \BSU(1) \longrightarrow \BSU$ is the canonical inclusion. In particular, we have 
\[ \pi_\ast (\Ho \wedge \BSU_+) = \Ho_\ast(\BSU) = \Z[b_1, b_2, \ldots ], \quad |b_i| = 2i, \]
where $b_i$ is the image of the fundamental generator of $\Ho_{2i} (\BSU(1))$ under $\iota_\ast$. We now define an equivariant theory $\Hh$ for which we subsequently construct a natural twist. 

\medskip
\begin{defn} (Borel-equivariant singular cohomology $\Hh$ with parameters) \label{Twist-7c}

\noindent
Define an equivariant cohomology theory $\Hh_{\SU(r)}$ by first constructing the naive $\SU(r)$-equivariant theory given by the Borel completion $\Map(\ESU(r), \Ho \wedge \BSU_+)$, and then inducing up from the trivial to the complete universe.
\end{defn}

\begin{defn} (A twist $\theta_\wp$ for $\Hh$) \label{Twist-6}

\noindent
One checks that $\Hh$ satisfies the conditions of remark \ref{units}. Hence we may classify a twist by an equivariant map from $\SU(r)$ to $\BGL(\Hh_{\SU(r)})$. The group structure of $\BSU$ canonically extends to an equivariant map $\wp$ of group-like monoids, which we may classify
\[ \wp : \Map(\ESU(r), \BSU) \longrightarrow \GL(\Hh_{\SU(r)}), \quad \quad B\wp : \Map(\ESU(r), \SSU) \longrightarrow \BGL(\Hh_{\SU(r)}).  \]
Now let $\SU(r)$ be seen as a space with an $\SU(r)$-action by conjugation. Consider the composite map
\[ \lambda_r : \ESU(r) \times_{\SU(r)} \SU(r) \longrightarrow \ESU \times_{\SU} \SU = \LBSU \llra{\pi_{\SU}} \SU \llra{\pi_{\SSU}} \SSU, \]
where the map $\pi_{\SU} : \LBSU \longrightarrow \SU$ is the projection under the h-space splitting $\LBSU = \SU \times \BSU$. Similarly, $\pi_{\SSU} : \SU \longrightarrow \SSU$ is the projection under the h-space splitting $\SU = \SSU \times \SU(1)$. Let us denote by $\check{\lambda}_r : \SU(r) \longrightarrow \Map(\ESU(r), \SSU))$ the equivariant map given by the adjoint of the map $\lambda_r$ given above. Our twisting class $\theta := \{ \theta_r \}$ is defined as the composite
\[ \theta_r = B\wp \circ \check{\lambda}_r : \SU(r) \longrightarrow \Map(\ESU(r), \SSU)) \longrightarrow \BGL(\Hh_{\SU(r)}).  \]
Since the maps $\pi_{\SU}$ and $\pi_{\SSU}$ are maps of h-spaces, and since this h-space structure is compatible with block-diagonal sums $i_{r,s} : \SU(r) \times \SU(s) \longrightarrow \SU(t)$, it follows that the family $\theta_\wp := \{ \theta_r \}$ serves as a twisting for $\Hh$ as required by definition \ref{Twist-6a} in light of remark \ref{units}. 
\end{defn}

\smallskip
\begin{claim} \label{Twist-6b}
Consider the map induced by $\theta_1$ on the level of fundamental groups 
\[ \pi_1(\theta_1) : \Z = \pi_1 (\SU(1)) \longrightarrow \pi_1(\BGL(\Hh_{\SU(1)})^{\SU(1)}) = \pi_0(\GL(\Hh_{\SU(1)})^{\SU(1)}) \subset [ \BSU(1), \Ho \wedge \BSU_+]. \]
Then the image of $1 \in \Z$ under $\pi_1(\theta_1)$ is the canonical map $\iota : \BSU(1) \longrightarrow \BSU \longrightarrow \Ho \wedge \BSU_+$. Alternatively, interpreting $[ \BSU(1), \Ho \wedge \BSU_+]$ as $(\Ho \wedge \BSU_+)^0(\BSU(1)) = (\Ho \wedge \BSU_+)^0\llbracket x \rrbracket$, the image of $1 \in \Z$ is given by the invertible power series $1 + \wp(x)$, where 
\[ 1 + \wp(x) := \sum_{i \geq 0} b_i x^i, \quad b_0 = 1, \quad \mbox{so that} \quad \wp(x) = \sum_{i \geq 1} b_i x^i.\]
\end{claim}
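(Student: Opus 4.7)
The plan is to trace the composite $\theta_1 = B\wp \circ \check\lambda_1$ step by step on the generator $1 \in \pi_1 \SU(1) = \Z$, represented by the identity loop $\gamma: S^1 = U(1) \to \SU(1)$. Since $\SU(1)$ is abelian, its conjugation action on itself is trivial, so under adjointness $\check\lambda_1 \circ \gamma$ corresponds to the restriction of $\lambda_1$ to $\BSU(1) \times U(1) = \ESU(1) \times_{\SU(1)} \SU(1)_{\mathrm{conj}}$, a map landing in $\SSU$ via the composite $\BSU(1) \times U(1) \to \LBSU \to \SU \to \SSU$.

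Next I would unravel this composite using the $h$-space splitting $\LBSU = \SU \times \BSU$. The map $\BSU(1) \times U(1) \to \LBSU$ is induced by the canonical inclusion $\iota: \BSU(1) \to \BSU$ together with its loop map $\Omega\iota : U(1) \to \SU$, and the projections $\pi_{\SU}$ and $\pi_{\SSU}$ package together the ``monodromy'' and ``basepoint'' data from this inclusion. This identifies $\check\lambda_1 \circ \gamma$ as a specific equivariant class on $\BSU(1)$ which, up to adjunction, is represented by the tautological line-bundle map $\iota$.

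Applying $B\wp$ then converts this class into an element of $\pi_0 \GL(\Hh_{\SU(1)})^{\SU(1)}$. Recall that $\wp$ sends $y \in \BSU$ to the grouplike unit $\delta_y$ in the group ring $\Ho \wedge \BSU_+$, so the composite $B\wp \circ \check\lambda_1$ evaluated on the generator picks out precisely the map $\BSU(1) \xrightarrow{\iota} \BSU \to \Ho \wedge \BSU_+$. Expanding this class using $(\Ho \wedge \BSU_+)^0(\BSU(1)) = \Z[b_1, b_2, \ldots]\llbracket x \rrbracket$ and the defining property $b_i = \iota_*(\beta_i)$ for $\beta_i$ the fundamental generator of $\Ho_{2i}(\BSU(1))$ yields the invertible power series $1 + \sum_{i \geq 1} b_i x^i = 1 + \wp(x)$.

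The main work is in verifying the identification in the previous paragraph: namely, that the equivariant composite $B\wp \circ \check\lambda_1$ evaluated on the generator of $\pi_1 \SU(1)$ really recovers the canonical map $\iota$. This hinges on the compatibility between the two $h$-space structures on $\BSU$ (loop composition and direct sum of virtual bundles) and the resulting interaction between the splittings $\LBSU = \SU \times \BSU$ and $\SU = \SSU \times \SU(1)$; one must check that the equivariant Borel structure is what forces the $b_i x^i$ terms to survive. An independent consistency check comes from the twisted cohomology of the unknot: having $\pi_1(\theta_1)(1) = 1 + \wp(x)$ produces the quotient $\Z\llbracket x \rrbracket[b_1, b_2, \ldots]/\langle \wp(x)\rangle$, matching the value asserted earlier for the unknot.
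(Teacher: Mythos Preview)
Your outline follows the same route as the paper: unwind $\theta_1 = B\wp \circ \check\lambda_1$ via adjunction, factor $\lambda_1$ through a map $\Sigma \BSU(1) \to \SSU$, and identify its adjoint with the canonical inclusion $\iota$. But the step you flag as ``the main work'' is exactly where your argument is missing content, and the paper supplies something concrete there that you do not.

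Specifically, your description of the map $\BSU(1) \times \SU(1) \to \LBSU$ as ``induced by the canonical inclusion $\iota$ together with its loop map $\Omega\iota$'' is not correct: the splitting $\LBSU \simeq \SU \times \BSU$ is via the $h$-space structure, and the map from the Borel construction $\ESU(1)\times_{\SU(1)}\SU(1)$ is \emph{not} the product of these two maps on the factors. What the paper does instead is give an explicit geometric model for $\tilde\pi : \ESU \times_{\SU} \SU \to \SU$: to a pair $(F,A)$ of an $r$-frame $F$ in $\C^\infty$ and a matrix $A \in \SU(r)$, assign the element of $\SU$ acting as $A$ on the span of $F$ and as the identity on its orthogonal complement. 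For $r=1$ this visibly factors through $\Sigma \BSU(1) \to \SSU$, and this particular map is the classical Bott map (the paper cites Mimura--Toda), whose adjoint is known to be $\iota$. Without this explicit frame description you have no leverage to make the identification; appeals to ``compatibility between the two $h$-space structures'' do not pin it down.

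Finally, your ``consistency check'' via the unknot value is circular: that computation is downstream of this claim, not independent of it.
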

\begin{proof}
Consider the map $\lambda_r$ for $r=1$
\[ \lambda_1 : \BSU(1) \times \SU(1) \longrightarrow \ESU \times_{\SU} \SU \llra{\pi} \SSU, \]
where $\pi = \pi_{\SSU} \pi_{\SU}$ in terms of the projections defined in definition \ref{Twist-6}. One may describe $\pi$ explicitly as follows. First, we identify $\ESU \times_{\SU} \SU$ as the direct limit of the spaces of the form $\ESU(r)\times_{\SU(r)} \SU(r)$. Consider an equivalence class $[(F,A)] \in \ESU(r)\times_{\SU(r)} \SU(r)$, where $F$ is an $r$-frame in $\C^{\infty}$, and $A$ is a element in $\SU(r)$. Given the pair $(F,A)$, one may construct a unitary matrix in $\SU$ which is defined as the matrix $A$ in the frame $F$, and as the identity matrix in the complement of the subspace spanned by $F$. Taking the limit as $r$ grows, we obtain a map 
\[ \tilde{\pi} : \ESU \times_{\SU} \SU \longrightarrow \SU, \]
which is easily seen to be a map of $h$-spaces, with the $h$-space structure being induced by the block-diagonal sum of matrices. The homomorphism $\pi$ defined above is obtained by projection to $\SSU$. The map $\lambda_1$ given by restricting $\pi$ to $\BSU(1) \times \SU(1)$ is easily seen to factor through a map (which we also denote by the same name)
\[ \lambda_1 : \Sigma \BSU(1) \longrightarrow \SSU. \]
The image of $1 \in \Z$ under $\pi_1(\theta_1)$ is precisely the map induced by the adjoint of $\lambda_1$. On the other hand, the geometric description of $\lambda_1$ given above is well known (in the context of Bott periodicity) to be the map whose adjoint is $\iota$ (see \cite{MT} for instance). The proof of the claim follows. 
\end{proof}

\medskip
\noindent
An easy computation that one can make at this point is the twisted cohomology of the spectra of broken symmetries in the abelian case of $\SU(1)$. Since $\BC(1) = \SU(1)$, we see that $\BC(1)^{\Ev} = \R$, and that the twisted cohomology is a quotient of a graded power series with coefficients in $\Z[b_1, b_2, \ldots]$ which is only non-trivial in odd degrees

\[ \sideset{^{\theta}}{_{\SU(1)}^{2\ast + 1}}\Hh(\SU(1)) = \frac{\Z[b_1, b_2, \ldots]\llbracket x\rrbracket}{\langle \wp(x) \rangle}, \quad \quad \quad \quad \sideset{^{\theta}}{_{\SU(1)}^{2\ast}}\Hh(\SU(1)) = 0 . \]
To see this, we may express $\R$ explicitly as a $\LSU(1)$-CW complex in terms of a homotopy pushout
\[
\xymatrix{
\LSU(1)_+ \wedge_{\SU(1)} (S^0 \vee S^0) \ar[d]^{id \vee id} \ar[r]^{\quad \sigma \vee id} & \LSU(1)_+ \wedge_{\SU(1)} S^0  \ar[d] \\
\LSU(1)_+ \wedge_{\SU(1)} S^0     \ar[r] & \R. 
}
\]
where the left vertical map is induced by the standard map $id \vee id : S^0 \vee S^0 \rightarrow S^0$, and the top horizontal map is the standard map twisted by the right-action of $\sigma$ on one of the factors $ \LSU(1)_+ \wedge_{\SU(1)} S^0$. Applying twisted cohomology and identifying $\sigma$ with $1 + \wp(x)$, we get a Mayer-Vietoris sequence which reduces to the computation given above. 

\medskip
\noindent
The extension to $\SU(1)^{\times r}$ is straightforward

\[ \sideset{^{\theta}}{_{\SU(1)^{\times r}}^{2\ast+r}}\Hh(\SU(1)^{\times r}) = \frac{\Z[b_1, b_2, \ldots]\llbracket x_1, \ldots, x_r \rrbracket}{\langle \wp(x_1), \ldots, \wp(x_r) \rangle}, \quad \quad \sideset{^{\theta}}{_{\SU(1)^{\times r}}^{2\ast+r-1}}\Hh(\SU(1)^{\times r}) = 0. \]

\bigskip
\noindent
We direct the reader to theorem \ref{Twist-8} that generalizes the above computation to spaces of broken symmetries for redundancy free positive sequences $I$. For arbitrary positive sequences, the twisted cohomology $\sideset{^{\theta}}{_{\SU(r)}^\ast}\Hh(\BC(w_I))$ is described in \cite{Go}. 

\medskip
\noindent
The main tool that comes in handy is a spectral sequence induced by a filtration of $\Hh$ by ideals which we now describe. Recall from \cite{HY} that the spectrum $\Sigma^\infty \BSU_+$ has a $\mathbb{A}_\infty$-multiplicative stable splitting, with associated graded object given by $\bigvee_k \MU(k)$. This splitting induces a $\mathbb{E}_2$-monoidal splitting of $\Ho \wedge \BSU_+$ realizing the splitting of $\Z[b_1, b_2, \ldots]$ into homogeneous summands in the generators $b_i$. 

\medskip
\noindent
Consider the (split) filtration of $\BSU_+$ by ideals
\[ F_k (\BSU_+) = \bigvee_{k \leq m} \MU(m), \quad \quad \mbox{where} \quad \MU(0) = S^0.\]
Since the filtration is by ideals, each associated quotient $\MU(k)$ is a module over the ring spectrum $\Sigma^{\infty} \BSU_+$. We make a straightforward observation 

\medskip
\begin{claim} \label{augmentation}
The induced $\Sigma^{\infty} \BSU_+$-module structure on the associated quotient $\MU(k)$ factors through the augmentation map $\Sigma^\infty \BSU_+ \longrightarrow S^0$. 
\end{claim}
\begin{proof}
In terms of the filtration, we notice that $\Sigma^{\infty} \BSU_+ = F_0$. Since the splitting of the filtration is multiplicative, the $F_0$-module structure on $\MU(k) = F_k / F_{k+1}$ factors through the ring spectrum $F_0 / F_1$. The map $F_0 \longrightarrow F_0 / F_1$ is nothing other than the augmentation. 
\end{proof}

\begin{corr} \label{augmentation2}
Let $\Map(\ESU(r), \Ho \wedge \MU(k))$ denote the factor of $\Hh_{\SU(r)}$ induced by the splitting of $\Sigma^\infty \BSU_+$. Then the induced action of $\Map(\ESU(r), \BSU)$ on this summand is trivial. 
\end{corr}

\medskip
\noindent
The filtration $F_k$ of $\Sigma^\infty \BSU_+$ gives rise to an induced $\LSU(r)$-equivariant filtration of $\Hh_{\SU(r)}$ given by $F_k \Hh_{\SU(r)} := \Map(\ESU(r), \Ho \wedge F_k (\BSU_+))$ fitting into a tower of $\LSU(r)$-spectra
\[
\Hh_{\SU(r)} \longrightarrow \cdots \longrightarrow \Hh_{\SU(r)} /F_k \Hh_{\SU(r)} \longrightarrow \cdots \longrightarrow \Hh_{\SU(r)} /F_1 \Hh_{\SU(r)}  = \Ho_{\SU(r)} \longrightarrow  \ast
\]
so that homogeneous layers are of the form $\Map(\ESU(r), \Ho \wedge \MU(k))$. Given a positive indexing sequence $I$, we may take $\LSU(r)$-equivariant maps from $\BC(w_I)^{\Ev}$ into this tower to get a spectral sequence of $\Hh^\ast_{\SU(r)} (\BC(w_I))$-modules. Now using corollary \ref{augmentation2} we notice that $(\Ho \wedge \MU(k))_{\SU(r)} := \Map(\ESU(r), \Ho \wedge \MU(k))$ has a trivial $\OSU(r)$-action. Hence, all $\OSU(r)$-equivariant maps from $\BC(w_I)^{\Ev}$ into any homogeneous layer factor through $\SU(r)$-equivariant maps from $\BC(w_I)$. 

\medskip
\noindent
The above observations lead to a useful theorem

\medskip
\begin{thm} \label{UT2T}
There is a cohomologically graded spectral sequence of $\Hh^\ast_{\SU(r)} (\BC(w_I))$-modules
\[ E_1^{k,j} =   \Hh^{k,j}_{\SU(r)} (\BC(w_I))  \, \, \Rightarrow \, \, \sideset{^{\theta}}{_{\SU(r)}^{k+j}}\Hh(\BC(w_I)),  \]
where $\Hh^{k,j}_{\SU(r)} (\BC(w_I))$ denotes $(\Ho \wedge \MU(k))_{\SU(r)}^{k+j}(\BC(w_I))$. Furthermore, the $\Hh^\ast_{\SU(r)} (\BC(w_I))$-module structure of the spectral sequence shows that the differential $d_1$ is uniquely determined by its value on the class $ 1 \in \Hh^{0,0}_{\SU(r)} (\BC(w_I)) = \Ho^0_{\SU(r)}(\BC(w_I))$. 
\end{thm}

\medskip
\begin{remark} \label{UT2Tb}
Note that by theorem \ref{DomK2-11a}, the groups $\Hh^{k, \ast}_{\SU(r)} (\BC(w_I))$ are the Hochschild homology groups of the Soergel bimodule $\Ho^\ast_{\SU(r)}(\BSa(w_I))$ after one extends the coefficients by homogeneous terms of degree $k$ in the variables $b_i$. 
\end{remark}

\noindent
Our next order of business is to describe the differential $d_1$ in a universal example. Recall that for a positive indexing sequence $I$, the space $\BC(w_I)$ admits a $\SU(r)$-equivariant map
\[ \rho_I : \BC(w_I) \longrightarrow \SU(r). \]
From the construction of $\rho_I$, we see that it factors through the map 
\[ \hat{\rho}_I : \BC(w_I) \longrightarrow \SU(r) \times_T \SU(r), \quad \quad [(g, g_1, \ldots, g_k)] \longmapsto [g, g_1g_2 \ldots g_k], \]
where the $T$-action on the second factor of $\SU(r)$ is by conjugation. Moreover, the maps $\hat{\rho}_I$ are compatible under inclusions of subsets $J \subseteq I$. We therefore define 

\medskip
\begin{defn} (A universal $\LSU(r)$-lift, and a universal spectral sequence) \label{Univlift}

\noindent
Define the $\SU(r)$-space $\UC(r)$ to be $\SU(r) \times_T \SU(r)$ with $T$ acting on the right factor of $\SU(r)$ by conjugation. Note that $\UC(r)$ supports a $\SU(r)$-equivariant map $\UC(r) \longrightarrow \SU(r)$ with $\SU(r)$ acting on itself by conjugation. This endows $\UC(r)$ with a $\LSU(r)$-lift $\UC(r)^{\Ev}$ supporting compatible maps from the spaces $\BC(w_I)^{\Ev}$. Moreover, one has a (universal) spectral sequence
of $\Hh^\ast_{\SU(r)}(\UC(r))$-modules
\[ E_1^{k,j} =   \Hh^{k,j}_{\SU(r)} (\UC(r))  \, \, \Rightarrow \, \, \sideset{^{\theta}}{_{\SU(r)}^{k+j}}\Hh(\UC(r)). \]
\end{defn}

\medskip
\noindent
We now proceed to describe the differential $d_1(1)$ in the universal spectral sequence. 

\medskip
\begin{thm} \label{Univdiff}
Let $B_i \in \Ho^{2i+1}(\SSU)$ denote the canonical primitive generator. Let $g$ denote the composite map 
\[  g : \ET \times_{T} \SU(r) \longrightarrow \ESU(r) \times_{\SU(r)} \SU(r) \llra{\lambda_r} \SSU, \]
where the first map above is the extension of the $T$-action to $\SU(r)$, and the second map is the one defined in \ref{Twist-6}. By identifying $\Ho^\ast_{\SU(r)}(\UC(r))$ with $\Ho^\ast_T(\SU(r))$, we obtain classes $g^\ast B_i \in \Ho^{2i+1}_{\SU(r)}(\UC(r))$ which we also denote by $B_i$. Then the differential $d_1$ in the universal spectral sequence in \ref{Univlift} is determined by 
\[ d_1(1) = \sum_{i \geq 1} b_i B_i \, \in \Hh_{\SU(r)}^{1,0}(\UC(r)). \]
Furthermore, one may restrict $d_1(1)$ to $\Hh_{\SU(r)}^{1,0}(\SU(r) \times_T T)$, along an injection $f^\ast$ induced by 
\[ f: \SU(r) \times_T T \longrightarrow \SU(r) \times_T \SU(r), \quad \quad \mbox{with} \quad \quad f^\ast d_1(1) = \sum_{j=1}^r \wp(x_j) \, \hat{x}_j , \]
where $\hat{x}_j$ and $x_j$ are the standard generators of $\Ho^1(T)$ and $\Ho_T^2$ respectively (see Section \ref{DomK2}). 
\end{thm}
\begin{proof}
Since $f$ is the inclusion of the $T$-fixed points, and $\Ho_T^\ast(\SU(r))$ is free over $\Ho_T^\ast$, it follows from standard facts that $f^\ast$ is injective. Now $f$ induces a map of spectral sequences, and so $f^\ast d_1(1)$ is the first differential for the spectral sequence corresponding to $\SU(r) \times_T T$. The formula for $f^\ast d_1(1)$ follows from claim \ref{Twist-6b}, and the additive defining property of the twist. Now consider the following factorization
\[
\xymatrix{
\ET \times_T T \ar[d]^{\lambda_1^r} \ar[r]^{f} & \ET \times_{T} \SU(r)   \ar[d] \ar[dl]_{g \quad} \\
\SSU & \ESU(r) \times_{\SU(r)} \SU(r)  \ar[l]_{\lambda_r \quad \quad}.
}
\]
The map $\lambda_1$ is well known in the context of Bott periodicity and has the property that $\lambda_1^\ast B_i = x^i \, \hat{x}$, where we express $\Ho^\ast_{\SU(1)}(\SU(1))$ as $\Lambda(\hat{x}) \otimes \Z[x]$ in terms of standard generators $\hat{x}$ and $x$ in degrees $1$ and $2$ respectively. Since $B_i$ is primitive, we see that 
\[  f^\ast B_i = {(\lambda_1^r)}^\ast B_i = \sum_{j=1}^r x_j^i \, \hat{x}_j. \]
Now using the formula $f^\ast d_1(1) = \sum_{j=1}^r \wp(x_j) \, \hat{x}_j = \sum_{j=1}^r \sum_{i = 1}^{\infty} b_i \, x_j^i \, \hat{x}_j$, and interchanging the order of summation, we see that $f^\ast d_1(1)$ is equal to $\sum_{i \geq 1} b_i \, f^\ast B_i$. By the injectivity of $f^\ast$, we have established the result for $\UC(r)$. 
\end{proof}

\section{$sl(n)$-link homology and related spectral sequences} \label{RSS}

\bigskip
\noindent
In the last section we constructed a twist $\theta_\wp$ of the equivariant theory $\Hh$. As mentioned earlier, it is a nontrivial task to show that $\{ \Hh, \theta_\wp \}$ is an INS-type theory. This will indeed be verified in the forthcoming article \cite{Go}. Until then however, it is not apriori obvious that we are allowed to invoke theorem \ref{Twist-7b} to construct link invariants as the pages of the spectral sequence induced by applying the twisted theory $\{ \Hh, \theta_\wp \}$ to the filtration of $\sBC(L)^{\Ev}$. However, in this section, we get around this problem by constructing link invariants using the spectral sequence induced by a hybrid (or total) filtration that incorporates two filtrations; the filtration $\sBC(L)^{\Ev}$ and a filtration of $\Hh$ we describe below. 

\medskip
\noindent
Recall that the tower of $\LSU(r)$-spectra 

\[  \Hh_{\SU(r)} \longrightarrow \cdots \longrightarrow \Hh_{\SU(r)} /F_k \Hh_{\SU(r)} \longrightarrow \cdots \longrightarrow \Hh_{\SU(r)} /F_1 \Hh_{\SU(r)}  = \Ho_{\SU(r)} \longrightarrow \ast. \]

\noindent
Recall also from \ref{DomK3-5} that if $w_I$ is a braid word in the braid group in $r$-strands, then $\sBC(w_I)^{\Ev}$ has its own filtration by $\LSU(r)$-spectra $F_j \sBC(w_I)^{\Ev}$. Hence, one obtains a {\em double tower} of mapping spectra
\[
F^{k,j} \Hh(\sBC(w_I)) := \Map^{\LSU(r)} (F_j \BC(w_I)^{\Ev}, \Hh_{\SU(r)} /F_k \Hh_{\SU(r)}).
\]

\noindent
Define $\{ F^n \Hh(\sBC(w_I)) \}$ as the tower with limit $\Map^{\LSU(r)} (\sBC_\infty(w_I)^{\Ev}, \Hh_{\SU(r)})$ given by the totalization (see \cite{HY} Lemma 5.9) of the double tower $F^{j,k} \Hh(\sBC(w_I))$ above. 
Recalling the terminology of definition \ref{DomK3-5}, we see that the layers of the total tower are of the form
\[ \bigvee_{k+j=n, \, J \in \I^j/\I^{j-1}} \Map^{\SU(r)} (\Sigma^j \BC(w_J)_+, \Map(\ESU(r), \Ho \wedge \MU(k)). \]
Applying homotopy to the total tower $F^n \Hh(\sBC(w_I))$ gives rise to a spectral sequence. After normalizing $\sBC(w_I)$ where $L$ is a closure of $w_I$, (see \ref{DomK3-12}), we get

\medskip
\begin{thm} \label{Rasmussen1}
Let $L$ denote a link given by the closure of a braid word $w_I$ in $r$-strands. Then there is a cohomologically graded spectral sequence induced by the tower $F^n \Hh(\sBC(L))$ with $E_1$-term  
\[ E_1^{t,s}(L,\theta_\wp) = \bigoplus_{r+k=t, \,  J \in \I^r/\I^{r-1}} \Hh_{\SU(r)}^{k,s}(\BC(w_J))  \, \, \Rightarrow \, \, \sideset{^{\theta}}{_{\SU(r)}^{s+t+l(w_I)}}\Hh(\sBC_{\infty}(L)). \]
The first differential $d_1$ is the bicomplex differential $d_1 = d_{\BC} + d_{\Hh}$, where $d_{\BC}$ denotes the first differential of theorem \ref{DomK2-11c}, and $d_{\Hh}$ denotes the first differential in theorem \ref{UT2T}. Furthermore, all terms of the spectral sequence $E_q(L, \theta_\wp)$ are invariants of the link $L$ for $q \geq 2$, with the value of the unknot, for all $q \geq 2$, being
\[ \frac{\Z[b_1, b_2, \ldots]\llbracket x \rrbracket}{\langle \wp(x) \rangle}. \]
\end{thm}

\begin{proof}
The construction of the spectral sequence via the totalization of a double tower shows that the first differential is a bicomplex differential as described. So the only thing we need to show is that the spectral sequence gives rise to link invariants starting with the second page. This can easily be seen by filtering the bicomplex so as to obtain a spectral sequence (this is a version of the Rasmussen spectral sequence; see \cite{Go} for details) that converges to the page $E_2(L, \theta_\wp)$
\[ \Ho (\Ho(\Hh_{\SU(r)}^{k,s} (\BC(w_J), d_{\BC}), d_{\Hh})  \, \, \Rightarrow \, \, E_2(L, \theta_\wp).\]

\noindent
By remark \ref{UT2Tb} and theorem \ref{DomK2-11c}, the inner homology term is a link invariant. It follows from the naturality of the construction of broken symmetries, that the algebraic spectral sequence converges to a link invariant. In particular, $E_2(L, \theta_\wp)$ is a link invariant. Again by naturality, all subsequent terms are link invariants as well. 
\end{proof}

\noindent
In light of theorem \ref{Rasmussen1}, we propose the following conjecture (see \cite{Go} for recent progress) 
\medskip
\begin{conj} \label{Mainconj}
The $E_2(L, \theta_\wp)$ page in the spectral sequence of theorem \ref{Rasmussen1} is a universal $sl(n)$-link homology for any $n \geq 1$. More precisely, setting $b_n = 1$ and $b_i = 0$ for $i \neq n$ (i.e. the specialization to $\wp(x) = x^n$) in the $E_1(L, \theta_\wp)$-term, gives rise to homology groups isomorphic to $sl(n)$-link homology. Furthermore, the bicomplex spectral sequence starting with Triply-graded link homology (with parameters $b_i$) and converging to $E_2(L,\theta_\wp)$ is equivalent to a universal Rasmussen-type spectral sequence (see Theorem 2 in \cite{J2}). 
\end{conj}

\begin{remark}
$E_2(L,\theta_\wp) \Rightarrow \sideset{^{\theta}}{_{\SU(r)}^{s+t+l(w_I)}}\Hh(\sBC_{\infty}(L))$ is a twisted version of $E_k(-1)$ in \cite{J2}.
\end{remark}

\smallskip
\begin{remark} \label{Galois5}
The Galois symmetry $\sigma$ (see remark \ref{Galois4} and example \ref{Galois7} below) gives rise to a symmetry of all pages $E_q(L, \theta_\wp)$. Its action on the value of the unknot is given by the following 
\[ \sigma(x) = -x, \quad \sigma(b_i) = (-1)^{i+1} b_i. \]
\end{remark}

\begin{remark} \label{Rasmussen2}
Notice that one may have run the algebraic spectral sequence in the proof of theorem \ref{Rasmussen1} in the other direction to obtain a spectral sequence of the form 
\[ \Ho (\Ho(\Hh_{\SU(r)}^{k,s} (\BC(w_J), d_{\Hh}), d_{\BC})  \, \, \Rightarrow \, \, E_2(L, \theta_\wp).\]
By theorem \ref{UT2T}, the inner term in the above double homology computes the twisted cohomology of the spectra $\BC(w_J)$. It will be shown in \cite{Go} that the above spectral sequence collapses. Moreover, $E_2(L, \theta_\wp)$ can be identified with the $E_2$ term of the spectral sequence described in Theorem \ref{Twist-7b}. 
\end{remark}

\section{Symmetries and Cohomology operations} \label{Cohops}

\medskip
\noindent
Let us now inquire into the  framework that describes the structure of symmetries and cohomology operations on the twisted theories. In principle, such symmetries and cohomology operations may ``act'' on the actual twist $\theta$, and so one must be careful in making sure that these operations are well defined. Let us take the example of the Galois symmetry $\sigma$ as defined in remark \ref{Galois4} that acts on $\sBC(L)$. This symmetry acts by pointwise complex conjugation on $\LSU(r)$. Using definition \ref{Twist-6a} we may apply $\sigma$ to the twist $\theta$ for a twisted cohomology theory $\{  \E_{\SU(r)}, \theta_r, r \geq 1 \}$. Since $\sigma$ may not preserve $\theta$, we see that $\sigma$ does not give rise to a symmetry of the twisted theory. Fortunately, it is often the case that $\E_{\SU(r)}$ admits a Galois symmetry $\sigma_E$ of its own, so that $\sigma_E \theta \sigma = \theta$. In such situations, one indeed obtains a Galois symmetry on $\{  \E_{\SU(r)}, \theta_r \}$. We given examples below and in section \ref{Appendix2}. 

\medskip
\begin{example} \label{Galois7} (The Galois symmetry on $\Hh$)

\medskip
\noindent
Consider the theory $\Hh_{\SU(r)} = \Map(\ESU(r), \Ho \wedge \BSU_+)$. Consider the Galois symmetry induced by complex conjugation on the spaces $\ESU(r)$ and $\SSU$. The Galois symmetry on $\SSU$ induces a symmetry on $\BSU = \OSSU$\footnote{It is important to note that the Galois symmetry on $\BSU$ when seen as $\OSSU$ is not the usual Galois symmetry on $\BSU$ when one regards it as an infinite Grassmannian.}. These together give rise to a Galois symmetry $\sigma_{\Hh}$ 
\[ \sigma_{\Hh}(z) = (-1)^{k} \quad \mbox{if} \quad z \in \Ho_{\SU(r)}^{2k}(pt), \quad \sigma_{\Hh}(b_i) = (-1)^{i+1} b_i. \]
Now one checks that $\sigma_{\Hh} \theta_{\wp} \, \sigma = \theta_{\wp}$, where $\sigma$ is the Galois symmetry of remark \ref{Galois4}, so that conjugation by the pair of Galois symmetries is seen to preserve the twist $\theta_{\wp}$ as defined in \ref{Twist-6}. This gives rise to a well-defined (conjugate) action of the Galois symmetry on the link invariants $E_q(L, \theta_{\wp})$. 
\end{example}

\medskip
\noindent
Let us now discuss cohomology operations. As with symmetries, these operations may not preserve the twist. In such situations one may work with universal theories where this action is incorporated into the coefficients. Let us describe two important examples. 

\medskip
\begin{example} (The Landweber-Novikov algebra) \label{Witt}

\medskip
\noindent
Recall that the equivariant theory $\Hh$ was defined as the Borel equivariant completion of $\Ho \wedge \BSU_+$. By Thom isomorphism, it is easy to see that $\Ho \wedge \BSU_+ \iso \Ho \wedge \MU$, where $\MU$ denotes the (non-equivariant) complex cobordism spectrum. The theory $\Ho \wedge \MU$ is universal in the sense that its homotopy $\Z[b_1, b_2, \ldots]$ represents (as functors from rings to sets) the data given by a formal group law $F$, endowed with an isomorphism $e_F(x)$ from the additive group law to $F$ \cite{A}. In other words, $e_F(x)$ is the ``universal exponential"
\[ e_F(x) := \sum_{i \geq 0} b_i x^{i+1}, \quad b_0 = 1, \quad \mbox{note that} \,\,   e_F(x) \,\,  \mbox{can be identified with} \quad x + x \, \wp(x). \]
Consider the map $\eta : \Ho \wedge \MU \longrightarrow \Ho \wedge \MU \wedge \MU$ that includes $\MU$ into the right factor. As before, the theory $\Ho \wedge \MU \wedge \MU$ also has a universal description in that its homotopy $\Z[b_i, s_j, i,j \geq 1]$ represents a pair of formal group laws $(F,G)$, and a pair of isomorphisms 
\[ e_F(x) := \sum_{i \geq 0} b_i x^{i+1}, \quad \quad f(x) := \sum_{i \geq 0} s_i x^{i+1}, \quad b_0 = s_0 = 1\]
with $e_F(x)$ being the universal exponential for $F$, and $f(x)$ being an isomorphism from $F$ to $G$. As such, the map $\eta$ represents the composite isomorphism $f(e_F(x))$ from the additive group law to $G$, i.e. the exponential for $G$. Invoking Thom isomorphism, the map $\eta$ represents a map of multiplicative cohomology theories 
\[ \eta : \Ho \wedge \BSU_+ \longrightarrow \Ho \wedge \BSU_+ \wedge \BSU_+. \] 
Performing the Borel equivariant completion along $\eta$ gives rise to a map of equivariant theories endowed with twisting classes $\theta_\wp$ and $\tilde{\theta}_\wp$ respectively 
\[ \eta : \Hh \longrightarrow \tilde{\Hh}, \quad \eta(\theta_\wp) := \tilde{\theta}_\wp, \]
where $\tilde{\Hh}$ is given by equivariantly completing $\Ho \wedge \BSU_+ \wedge \BSU_+$. 
Using the description in terms of formal group laws gives us an explicit description of the power series $\tilde{\wp}(x)$ for the equivariant theory $\tilde{\Hh}$. By the definition of $\eta$, we have
\[ x + x \, \tilde{\wp}(x) = \eta(x + x \, \wp(x)) = \sum_{i\geq 0} \eta (b_i) x^{i+1} = \sum_{i \geq 0} s_i \, (\sum_{j \geq 0} b_j x^j)^{i+1}, \quad b_0 = s_0 = 1. \]
We claim that the map of twisted theories $\eta$ incorporates the data that amounts to a representation of the so-called Landweber-Novikov algebra \cite{M}. In particular, one obtains a representation of the positive Witt algebra $\mathcal{W}$ (which is a sub algebra of the Landweber-Novikov algebra). To see this, let us consider the finite-type $\Z$-dual $\mathcal{W}$ of $\Z[s_1, s_2, \ldots]$. In other words $\mathcal{W}$ is a free $\Z$-module on the duals of the monomials in $s_i$. Then it follows from the above formulas that the map $\eta$ turns $\mathcal{W}$ into an algebra of operators on $\Hh_{\SU(1)}^\ast$. To get a sense of this algebra, let $\mbox{L}_m$ denote the primitive element in $\mathcal{W}$ given by $\mbox{L}_m(s_m)=1$, and defined to evaluate trivially on all other monomials in $s_j$. By the above formula for $\eta$, we get a formula for the action of the operator $\mbox{L}_m$ on the class $y := x + x \, \wp(x)$ defined as
\[ \mbox{L}_m(y) := \mbox{L}_m(\eta(x + x \, \wp(x))) = (\sum_{i \geq 0} b_i x^{i+1})^{m+1} = y^{m+1} \frac{\partial}{\partial y} (y). \]
The relations between the operators $\mbox{L}_m$ show that the sub algebra of $\mathcal{W}$ generated by the elements $\mbox{L}_m$ is isomorphic to the positive Witt algebra. We expect that this action is related to results in \cite{KR}. 
\end{example}

\medskip
\begin{example} (The even Steenrod Algebra) \label{Steenrod}

\medskip
\noindent
In the previous example, we studied the twisting on the Borel equivariant completion $\Hh$ of  $\Ho \wedge \MU$. Let us now define the equivariant theory $\Hh(\F_p)$ as the Borel equivariant completion of $\Ho(\F_p) \wedge \MU$, where $p$ is a prime, and $\Ho(\F_p)$ denotes the mod $p$ Eilenberg-MacLane spectrum. Since we have a map of multiplicative theories
\[ \zeta : \Ho \wedge \MU \longrightarrow \Ho(\F_p) \wedge \MU, \]
induced by the canonical maps on the respective factors, the map $\zeta$ induces a map of equivariant cohomology theories, each endowed with a twist
\[ \zeta : \Hh \longrightarrow \Hh(\F_p) \quad \quad \zeta(\theta_\wp) := \theta_p \]
\noindent
 Notice that $\pi_\ast (\Ho(\F_p) \wedge \MU) = \F_p[b_1, b_2 \ldots]$ induced by $\zeta$. Now consider the inclusion of $\Ho(\F_p)$ as the {\em left} factor $\eta_{\Ah} : \Ho(\F_p) \wedge \MU \longrightarrow \Ho(\F_p) \wedge \Ho(\F_p) \wedge \MU$ inducing a map 
\[ \eta_{\Ah} : \Hh(\F_p) \longrightarrow \Ah(\F_p), \quad \quad \eta_{\Ah}(\theta_p) := \theta_{\Ah}, \]
where $\Ah(\F_p)$ is the equivariant theory obtained on completing $\Ho(\F_p) \wedge \Ho(\F_p) \wedge \MU$. The homotopy of $\Ho(\F_p) \wedge \Ho(\F_p) \wedge \MU$ is of the form
\[ \pi_\ast (\Ho(\F_p) \wedge \Ho(\F_p) \wedge \MU) = \pi_\ast (\Ho(\F_p) \wedge \Ho(\F_p)) \, [b_1, b_2, \ldots ], \]
where the homotopy groups of the spectrum $\Ho(\F_p) \wedge \Ho(\F_p)$ is none other than the dual mod-$p$ Steenrod algebra. As such $\pi_\ast (\Ho(\F_p) \wedge \Ho(\F_p))$ is an algebra on free variables $\xi_i$ for $i \geq 1$ in homological degree $2(p^i-1)$, as well as a family of free variables in odd degree.
As in the previous example, the subalgebra of $\pi_\ast (\Ho(\F_p) \wedge \Ho(\F_p) \wedge \MU)$ generated by the classes $\xi_i$ and $b_j$ represents the data given by a pair of isomorphisms $a(x)$ and $e_F(x)$ of formal group laws over $\F_p$, where $e_F(x)$ is as before, and $a(x)$ is an automorphism of the additive formal group law \cite{I,W}
\[ a(x) = \sum_{i \geq 0} \xi_i x^{p^i}, \quad \xi_0 = 1. \]
As in the previous example, $\eta_{\Ah}$ represents composition of isomorphisms. We may apply $\eta_{\Ah}$ to $\Hh(\F_p)_{\SU(1)}^\ast$ and describe $\eta_{\Ah}$ applied to the class $x + x \, \wp(x)$ by a formula 
\[ x + x \, \wp_{\Ah}(x) = \eta_{\Ah}(x + x \, \wp(x)) = \sum_{i\geq 0} \eta_{\Ah} (b_i) x^i = \sum_{i \geq 0} b_i \, (\sum_{j \geq 0} \xi_j x^{p^j})^i, \quad \xi_0 = b_0 = 1. \]
The reader may verify that this formula describes a co-action of the dual even Steenrod algebra on $\Hh(\F_p)_{\SU(1)}^\ast$ extending the standard co-action on $\Ho(\F_p)_{\SU(1)}^\ast = \F_p[x]$. 
\end{example}
\section{Appendix: Calculations in Borel equivariant cohomology} \label{Appendix}

\medskip
\noindent
The task we aim to achieve in the Appendix is to study the Borel equivariant cohomology of spaces of the form $\BC(w_I)$ for some positive indexing sequence $I$. The answer will be expressed in terms of the Borel equivariant  cohomology of the Bott-Samelson spaces $\BSa(w_I)$. And so we will begin with the structure of the latter. Consider the Bott-Samelson variety $\BSa(\sigma_i) = \SU(r) \times_T (G_i/T)$, where $\sigma_i \in \Br(r)$ is the standard generator for $1 \leq i < r$. 

\medskip
\begin{thm} \label{DomK3-17}
$\Ho_{\SU(r)}^\ast(\BSa(\sigma_i))$ is a rank two free module over $\Ho_T^\ast$, generated by classes $\{ 1, \delta \}$, where $\delta \in \Ho_{\SU(r)}^0(\BSa(\sigma_i))$ is uniquely defined by the property $\delta^2 + \alpha \delta = 0$, where $\alpha \in \Ho_T^2$ is the character $x_i - x_{i+1}$ corresponding to the root $\alpha$. 
\end{thm}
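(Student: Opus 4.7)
The plan is to reduce the computation to a $T$-equivariant cohomology calculation via the Borel construction, and then to identify $G_i/T$ with a projective line on which the answer is standard. Since $\BSa(\sigma_i) = \SU(r)\times_T (G_i/T)$ is an induced $\SU(r)$-space, the Borel construction identifies $\Ho^\ast_{\SU(r)}(\BSa(\sigma_i))$ with $\Ho^\ast_T(G_i/T)$, where $T$ acts on $G_i/T$ by left multiplication through its inclusion into $G_i$. Because $G_i$ is the Levi of $\SU(r)$ with roots $\pm\alpha_i$, its flag variety is $T$-equivariantly isomorphic to $\CP(V)$, where $V$ is the two-dimensional $T$-representation with weights $x_i,x_{i+1}$.

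The projective bundle formula applied to the trivial rank-two bundle $V\to\mbox{pt}$ then yields
\[ \Ho^\ast_T(\CP(V)) \;=\; \Ho^\ast_T[\xi]\big/\langle (\xi-x_i)(\xi-x_{i+1})\rangle, \]
where $\xi = c_1^T(\mathcal{O}_{\CP(V)}(1))$. This immediately establishes freeness of rank two as an $\Ho^\ast_T$-module, with basis $\{1,\xi\}$. Setting $\delta := \xi - x_i$ and using $\alpha = x_i - x_{i+1}$, a direct substitution gives $\delta(\delta+\alpha) = (\xi-x_i)(\xi-x_{i+1}) = 0$, which is the stated relation. Alternatively, one may define $\delta$ geometrically as the equivariant Poincar\'e dual of the Schubert $T$-fixed point $s_i T\in G_i/T$ and verify the relation by restriction to the two $T$-fixed points, using the fact that equivariant formality makes the localization map injective, so the identity reduces to the elementary observation that the tangent weight at one of the fixed points equals $-\alpha$.

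For uniqueness, any class $\eta\in\Ho^2_{\SU(r)}(\BSa(\sigma_i))$ can be written as $\eta = a\delta + b$ with $a\in\Z$ and $b\in\Ho^2_T$. Imposing $\eta^2+\alpha\eta = 0$ and using the relation already established for $\delta$ reduces to a system in $a,b$ whose only generating solutions are $\eta = \delta$ and $\eta = -(\delta+\alpha)$; these two are the Poincar\'e duals at the two $T$-fixed points of $G_i/T$, and either is singled out by a choice of Bruhat convention.

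The main technical obstacle is not a mathematical difficulty but a bookkeeping one: one must commit to an orientation convention so that the distinguished generator comes with the sign appearing in $\delta^2+\alpha\delta = 0$ rather than $\delta^2-\alpha\delta = 0$. This choice becomes consequential later, because the class $\delta$ is propagated by pullback along $\pi(j)$ to define the Schubert generators $\delta_j$ on general Bott-Samelson varieties $\BSa(w_I)$, and the recursion for $[\alpha]_j$ in Theorem \ref{DomK2-10} is then rigidly determined by this initial convention.
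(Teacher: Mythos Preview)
Your proof is correct and arrives at the same answer, but the route differs from the paper's. You invoke the projective bundle formula for $\CP(V)$ with $V$ the two-dimensional $T$-representation of weights $x_i,x_{i+1}$, obtaining the presentation $\Ho_T^\ast[\xi]/\langle(\xi-x_i)(\xi-x_{i+1})\rangle$ and then substituting $\delta=\xi-x_i$. The paper instead builds a cofiber sequence by collapsing the section through the fixed point $T/T$, so that the cofiber is the Thom space of the normal representation $\overline{\alpha}$ at the other fixed point; $\delta$ is then \emph{defined} as the image of that Thom class, and the relation $\delta^2+\alpha\delta=0$ is read off by restricting to the two fixed points. Your alternative paragraph (equivariant Poincar\'e dual of $s_iT$, checked by localization) is essentially the paper's argument in compressed form.

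What each buys: the projective bundle formula is quicker and makes the ring presentation transparent, but it introduces an auxiliary class $\xi$ and a shift by $x_i$ that one then has to track. The paper's Thom-class definition pins down $\delta$ geometrically from the outset (it vanishes under $s^\ast$ and restricts to $-\alpha$ under $s_{\sigma_i}^\ast$), which is exactly the normalization reused in the inductive cofiber argument for general $\BSa(w_I)$ in Theorem~\ref{DomK3-20}. Your closing remark about uniqueness is well taken: as you observe, the quadratic relation alone has two solutions $\delta$ and $-(\delta+\alpha)$, corresponding to the two fixed points, so the paper's word ``uniquely'' really means ``uniquely once the Bruhat convention is fixed''---a point the paper's own proof handles implicitly by choosing which section to collapse.
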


\begin{proof}
The proof of theorem \ref{DomK3-17} is classical. Consider the two $T$-fixed points of $G_i/T$ given the the cosets $T/T$ and $\sigma_i T/T$. The normal bundle of $\sigma_i T$ in $G_i/T$ is isomorphic to the $T$-representation with linear character given by the dual $\overline{\alpha}$ of $\alpha$. The two fixed points $T/T$ and $\sigma_i T/T$ give rise to two sections $s$ and $s_{\sigma_i}$ respectively of the bundle 
\[ \SU(r) \times_T (G_i/T) \longrightarrow \SU(r)/T. \]
Pinching off the section $s$ gives rise to a cofiber sequence which splits into short exact sequences in equivariant cohomology
\[ 0 \longrightarrow \Ho_T^\ast(\Sigma^{\overline{\alpha}}) \llra{f^\ast} \Ho^\ast_{\SU(r)}(\BSa(\sigma_i)) \llra{s^\ast} \Ho_T^\ast \longrightarrow 0. \]
Let $\lambda \in \Ho^\ast_{\SU(r)}(\BSa(\sigma_i))$ be the class given by the image to the Thom class of the representation $\overline{\alpha}$ under the map $f^\ast$. We see from from the above sequence that $\{1, \lambda \}$ are $\Ho_T^\ast$-module generators of $\Ho_{\SU(r)}^\ast(\BSa(\sigma_i))$. Consider the map induced by the inclusion of fixed points
\[ s \sqcup s_{\sigma_i} : (\SU(r)/T) \sqcup (\SU(r)/T) \longrightarrow \BSa(\sigma_i). \]
The above short exact sequence shows that $\lambda$ is uniquely determined by the fact that it restricts trivially along $s^\ast$ (by construction) and restricts to the element $-\alpha$ along $s_{\sigma_i}^\ast$ since $-\alpha$ is the Euler class of the representation $\overline{\alpha}$. The generator $1 \in \Ho_{\SU(r)}^0(\BSa(\sigma_i))$ clearly restricts to $1 \in \Ho_T^0$ along both fixed points. It now follows that any element in $\Ho_{\SU(r)}^\ast(\BSa(\sigma_i))$ is uniquely determined by its restrictions along these two fixed points. Applying this observation to $\lambda^2$ yields the relation $\lambda^2 + \alpha \lambda = 0$. 
Our generator $\delta$ is simply defined as $\lambda$. It is straightforward to see that $\delta$ is the unique class that satisfies 
\[ \delta^2 + \alpha \,  \delta = 0. \]
\end{proof}

\begin{remark} \label{DomK3-18}
Consider the map
\[ \tau_i : \SU(r) \times_T (G_i/T) \longrightarrow \SU(r)/T, \quad [(g, g_i T)] \longmapsto gg_i T. \]
Then, given $\gamma \in \Ho^\ast_{\SU(r)}(\SU(r)/T)) = \Ho_T^\ast$, we my ask to express the element $\tau_i^\ast(\gamma)$ in terms of our generators $\{1, \delta \}$. This is easily done by using the restrictions along the two fixed points. Expressing $\tau_i^\ast (\gamma)$ as $\tau_i^\ast (\gamma) = a + b \, \delta$, 
we may restrict along $s^\ast$ to deduce that $a = \gamma$. Then restricting along $s_{\sigma_i}^\ast$ says that 
\[ b = \frac{\gamma - \sigma(\gamma)}{\alpha}. \]
\end{remark}

\medskip
\begin{defn} \label{Bott-Samelson} 
Recall the definition \ref{DomK2-10b} of Bott-Samelson varieties and their Schubert classes. Given $I = \{i_1, \ldots, i_k\}$, and any $j \leq k$, let $J_j = \{i_1, \ldots, i_j \}$. Define maps $\pi(j)$ and $\tau(j)$ 
\[ \pi(j) : \BSa(w_{J_j}) \longrightarrow \BSa(\sigma_{i_j}), \quad [(g, \, g_{i_1}, \ldots, g_{i_j})] \longmapsto [(gg_1\cdots g_{i_{j-1}}, \, g_{i_j})] \]
\[   \tau(j) :  \BSa(w_{J_j}) \longrightarrow \SU(r)/T, \quad \, \, \, \,  [(g, \, g_{i_1}, \ldots, g_{i_j})] \longmapsto [gg_1\cdots g_{i_j}]. \quad \quad \quad \]
Notice that one has a pullback diagram with vertical maps being canonical projections
\[
\xymatrix{
\BSa(w_{J_j}) \ar[d] \ar[r]^{\pi(j)} &  \BSa(\sigma_{i_j}) \ar[d] \\
    \BSa(w_{J_{j-1}}) \ar[r]^{\tau(j-1)} & \SU(r)/T. 
}
\]
Since $\BSa(w_I)$ projects canonically onto $\BSa(w_{J_j})$, we will use the same notation to denote the maps $\pi(j)$ and $\tau(j)$ with domain $\BSa(w_I)$. Define $\delta_j \in \Ho_{\SU(r)}^2(\BSa(w_I))$ as the pullback class $\pi(j)^\ast(\delta)$, where $\delta \in \Ho_{\SU(r)}^2(\BSa(\sigma_{i_j}))$ is the (unique) generator of $\Ho_{\SU(r)}^2(\BSa(\sigma_{i_j}))$ as a $\Ho_T^\ast$-module that satisfies $\delta^2 = -\alpha \, \delta$. Here $i_j = s$, and $\alpha := x_s - x_{s+1}$ denotes the character of the standard positive simple root $\alpha_s$ of $\SU(r)$ that corresponds to the subgroup $G_{i_j}$. In particular, we have 
\[ \delta_j^2 + [\alpha]_j \,  \delta_j = 0, \quad \mbox{where we define} \quad [x]_j := \tau(j-1)^\ast(x). \]
\end{defn}

\medskip
\begin{defn} (The redundancy set $\nu(I)$, its complement $\overline{\nu}(I)$, and the blocks of $I$) \label{DomK2-10c}

\noindent
Let $I = \{ i_1, i_2, \ldots, i_k \}$ denote a positive indexing sequence with each $i_j < r$. Let $\nu(I)$ be the (unordered) set of integers $s<r$ such that $s$ occurs somewhere in $I$ and denote by $\nu(s)$ the number of times $s$ occurs in $I$.  We say $I$ is redundancy free if $\nu(s)=1$ for all $s \in \nu(I)$. 

\smallskip
\noindent
Let $W_I \subseteq \Sigma_r$ be the subgroup of the Weyl group of $\SU(r)$ generated by the reflections $\sigma_s$ for $s \in \nu(I)$. $W_I$ is a standard subgroup in $\Sigma_r$ of the form $\Sigma_{r_1} \times \ldots \times \Sigma_{r_m}$, where $m$ is the number $W_I$-orbits in $\{1, \ldots, r \}$, and $r_i$ is their sizes. Note that this block decomposition corresponds to the fact that the groups $G_i$ for $i \in I$ generate the block diagonal subgroup $\SU(r_1) \times \ldots \times \SU(r_m) \subseteq \SU(r)$. 

\smallskip
\noindent
We call the $W_I$-orbits in $\{ 1, \ldots, r \}$ the blocks of $I$, and define $\overline{\nu}(I)$ as the collection of all blocks. Since all non-maximal elements $s$ in a block belong to $\nu(I)$, we see that $\overline{\nu}(I)$ is in bijection with the complement of the set $\nu(I)$ in $\{ 1, \ldots, r \}$, and hence the notation. Given $s \leq r$, we denote $l(s)$ the block in which $s$ belongs. 
\end{defn}

\medskip
\begin{thm} \label{DomK3-20}
Let $I = \{ i_1, i_2, \ldots, i_k \}$ denote a positive indexing sequence with $i_j < r$. Then 
\[ \Ho_{\SU(r)}^\ast(\BSa(w_I)) = \frac{\Ho^\ast_T[\delta_{1}, \delta_{2}, \ldots, \delta_{k}]}{\langle \delta_{j}^2 + [\alpha_s]_j \, \delta_{j}, \, \, \mbox{if} \, \, i_j \in I_s. \rangle }. \]
Moreover, for any weight $\alpha$, the character $\alpha \in \Ho_T^2$ satisfies the following recursion relations
\[  [\alpha]_1 = \alpha \quad \mbox{and} \quad [\alpha]_j = [\alpha]_{j-1} \,  + \, \alpha(h_u) \, \delta_{j-1}, \, \, \, \mbox{where} \, \, i_{j-1} = u, \]
with $\alpha(h_u)$ denoting the value of the weight $\alpha$ evaluated on the coroot $h_u$. Furthermore, the behaviour under inclusions $J \subseteq I$, is given by setting all $\delta_t = 0$ for $i_t \in I/J$. 
\end{thm}
\begin{proof}
The proof of the above theorem is a simple induction argument. Let $I = \{ i_1, \ldots, i_k \}$ as above. Assume that the classes $\{ \delta_{1}, \ldots, \delta_{k} \}$ have been constructed as in definition \ref{Bott-Samelson}. Now consider the indexing subsequence $J_{k-1} =\{ i_1, \ldots, i_{k-1} \}$. One has $G_{i_k}/T$-bundles
\[ \BSa(w_I) \longrightarrow \BSa(w_{J_{k-1}}), \quad [(g, g_{i_1}, \ldots, g_{i_k})] \longmapsto [(g, g_{i_1}, \ldots, g_{i_{k-1}})]. \]
As before, the above fibration supports two sections $s_J$ and $s_{J,\sigma_k}$ induced by the two cosets $\{ T/T, \sigma_{i_k} T/T \} \subset G_{i_k}/T$. Furthermore, using definition \ref{Bott-Samelson} one has a diagram of cofiber sequences induced by the inclusion of the section $s_J$
\[
\xymatrix{
 \BSa(w_{J_{k-1}})  \ar[r]^{s_J} \ar[d]^{\tau(k-1)} & \BSa(w_I) \ar[r] \ar[d]^{\pi(k)} & \Sigma^{\overline{\alpha}_{i_k}}\BSa(w_{J_{k-1}}) \ar[d]^{\tau(k-1)}   \\
 \SU(r)/T \ar[r]^{s} & \BSa(w_{i_k}) \ar[r] & \Sigma^{\overline{\alpha}_{i_k}} \SU(r)/T.
}
\]
By induction, the classes $\{\delta_{1}, \ldots, \delta_{k-1} \}$ restrict to generators of $\Ho_{\SU(r)}^\ast(\BSa(w_{J_{k-1}}))$. So by degree reasons, we see that the above diagram gives rise to a diagram of short exact sequences in equiariant cohomology. In particular, we see that $\Ho_{\SU(r)}^\ast(\BSa(w_I))$ is a free module of rank two on $\Ho_{\SU(r)}^\ast(\BSa(w_{J_{k-1}}))$ generated by the classes $\{1, \lambda_k \}$, where $\lambda_k$ is the image of the Thom class in $\Ho_{\SU(r)}^\ast(\Sigma^{\overline{\alpha}_{i_k}}\BSa(w_{J_{k-1}}))$. By theorem \ref{DomK3-17}, and using naturality, we see that this class is $\delta_{k}$. 
This completes the induction argument. It remains to prove the recursive relation formula. This follow from unraveling the formula in remark \ref{DomK3-18}. 
\end{proof}

\medskip
\begin{thm} \label{DomK3-23}
Let $I = \{ i_1, i_2, \ldots, i_k \}$ denote a positive indexing sequence with each $i_j < r$. Then, for any $s \in \nu(I)$, with $\nu(s) = 1$, there exists a canonical class $\beta_s \in \Ho_{\SU(r)}^3(\BC(w_I))$. Likewise, for any block $l \in \overline{\nu}(I)$, there exists a canonical class $\gamma_l \in \Ho_{\SU(r)}^1(\BC(w_I))$. 

\smallskip
\noindent
Let $\beta_I$ and $\gamma_I$ denote the set of all generators defined above. Then $\Ho_{\SU(r)}^\ast(\BC(w_I))$ is a free module over the algebra $\Ho_T^\ast \otimes \Lambda(\beta_I) \otimes \Lambda(\gamma_I)$. 

\smallskip
\noindent
Moreover, given a subset $J \subset I$ obtained by deleting an index $i_j$ so that $i_j = s$ and $\nu(s) = 1$, the restriction from $\Ho_{\SU(r)}^\ast(\BC(w_I))$ to $\Ho_{\SU(r)}^\ast(\BC(w_{J}))$ extends to an isomorphism 
\[ \varpi_I : \frac{\Ho_{\SU(r)}^\ast(\BC(w_I)) \otimes \Lambda(\gamma \, )}{\langle \, \, \beta_s - [\alpha_s]_j \, \gamma \, \,  \rangle} \llra{\cong} \Ho_{\SU(r)}^\ast(\BC(w_{J})), \quad \quad \gamma \longmapsto \gamma_{l(s)}. \]
In other words, switching from $\Ho_{\SU(r)}^\ast(\BC(w_I))$ to $\Ho_{\SU(r)}^\ast(\BC(w_{J}))$ swaps one exterior generator $\beta_s$ in exchange for an exterior generator $\gamma_{l(s)}$. 
In the above isomorphism, if $k > s$ is any integer in the same $W_I$-block as $s$, with $\nu(k)=1$ and $i_t = k$, then $\beta_k$ restricts to $\beta_k + [\alpha_k]_t \gamma_{l(s)}$. Finally, $\gamma_{l(s)}$ restricts to $\gamma_{l(s)} + \gamma_{l(s+1)}$. Other generators restrict to their namesakes. 
\end{thm}

\begin{proof}
Recall from the proof of theorem \ref{DomK2-11a} that one has a Serre spectral sequence converging to $\Ho_{\SU(r)}^\ast(\BC(w_I))$ with $E_2$-term 
\[ \Lambda(\epsilon_1, \ldots, \epsilon_r) \otimes \Ho_{\SU(r)}^\ast(\BSa(w_I)), \quad d(\epsilon_k) = \pi(I)^\ast(h_k^\ast) - \tau(I)^\ast(h^\ast_k), \]
where $\tau(I)$ and $\pi(I)$ were the maps defined in definition \ref{DomK2-10b}. This complex is the standard Koszul complex that computes the Hochschild homology of the bimodule $\Ho_{\SU(r)}^\ast(\BSa(w_I))$ with coefficients in $\Ho_T^\ast$. Since this spectral sequence collapses at $E_3$, and is free over $\Ho_T^\ast$ (as shown in the proof of theorem \ref{DomK2-11a}), we recover the cohomology ring $\Ho_{\SU(r)}^\ast(\BC(w_I))$ up to choices of lifts. We will show in what follows that one may make canonical choices for the classes $\beta_s$ and $\gamma_l$. 

\medskip
\noindent
Let us replace the generators $\epsilon_k$ by an alternate set of generators we denote by $\hat{\tau}_q$ and $\gamma_l$ for $q \in \nu(I)$ and $l \in \overline{\nu}(I)$, where 
\[ \hat{\tau}_q := \sum_{j \leq q, \, j \in l(q)} \hat{x}_j, \quad \quad \gamma_l := \sum_{j \in l} \hat{x}_j. \]
The classes $\gamma_l$ are easily seen to be parmanent cycles. For any index $s \in \nu(I)$ with $\nu(s) = 1$, let $i_j \in I$ be so that $i_j = s$. Then the recurrence relation given in theorem \ref{DomK3-20} shows that 
\[ d(\hat{\tau}_s) = \delta_j. \]
It follows that the class $\beta_s := \hat{\tau}_s(\delta_j + [\alpha_s]_j)$ is a permanent cycle. This class $\beta_s$ is bidegree $(2,1)$ and the class $\gamma_l$ is in bidegree $(0,1)$. There are no classes in lower filtration in degrees 3 and 1 respectively, therefore $\beta_s$ and $\gamma_l$ lift uniquely to $\Ho_{\SU(r)}^\ast(\BC(w_I))$. This defines the classes $\beta_s$ and $\gamma_l$ that we seek in degrees $3$ and $1$ respectively. 

\medskip
\noindent
Let $\beta_I$ and $\gamma_I$ denote the respective sets of the elements constructed above. One easily notices that the $E_3$-term of the spectral sequence is a free module over $\Ho_T^\ast \otimes \Lambda(\gamma_I)$. It follows that $\Ho_{\SU(r)}^\ast(\BC(w_I))$ is a free module over the algebra $\Ho_T^\ast \otimes \Lambda(\gamma_I)$.

\medskip
\noindent
To prove the rest of the theorem, we argue by induction starting with the maximal subsequence $I_0 \subseteq I$ with the property that $\nu(s) > 1$ for any $s \in \nu(I_0)$. In other words, $\Ho_{\SU(r)}^\ast(\BC(w_{I_0}))$ has no classes of the form $\beta_s$. From the above discussion, the theorem is true for $I_0$. 

\medskip
\noindent
Now assume that $J \subset I$ obtained by deleting an index $i_j$ so that $i_j = s$ and $\nu(s) = 1$. Also assume that the theorem is true for $J$. Removing the index $s$ from $\nu(I)$ splits the block $l(s)$ in $\overline{\nu}(I)$ into the two blocks in $\overline{\nu}(J)$ given by $l(s)$ and $l(s+1)$. Namely, the block $l(s) \in \nu(I)$ decomposes into a union of two $W_{J}$ orbits: $l(s) \cup l(s+1)$. It follows that the class $\gamma_{l(s)}$ restricts to $\gamma_{l(s)} + \gamma_{l(s+1)}$ under the restriction from $\Ho_{\SU(r)}^\ast(\BC(w_I))$ to $\Ho_{\SU(r)}^\ast(\BC(w_{J}))$. Similarly, comparing Serre spectral sequences, we see that the class $\beta_s$ restricts to $[\alpha_s]_j \gamma_{l(s)}$. The restriction for $\beta_k$ for $k>s$ with $\nu(k) = 1$, and sharing a block with $s$ is proved in the same fashion. This proves that the map $\varpi_I$ as stated in the theorem is well defined. 

\medskip
\noindent
It remains to show that $\varpi_I$ is an isomorphism, and that the cohomology $\Ho_{\SU(r)}^\ast(\BC(w_I))$ has an extra free generator $\beta_s$ in exchange for the generator $\gamma_{l(s)}$. Both these facts will follow easily once we show that $\Ho_{\SU(r)}^\ast(\BC(w_I))$ is a free module over the exterior algebra $\Lambda(\beta_s)$ with the space of generators given by $\Ho_{\SU(r)}^\ast(\BC(w_J))/\langle \gamma_{l(s)} \rangle$. 

\medskip
\noindent
Using the first Markov property, we may work with the special case where $J$ is obtained from $I$ by dropping the last index so that $J = \{ i_1, \ldots, i_{k-1} \}$. Let us express the block-diagonal group $G_{i_k} \subseteq \SU(r)$ as the group $T^{r_1} \times \SU(2) \times T^{r-r_1-2}$ in the standard basis of $\C^r$ for some $r_1$. We consider a product decomposition of the diagonal torus $T^r \subset \SU(r)$, which is of the form $T^r = T^{r-1} \times \Delta$, where $\Delta$ denotes the standard diagonal maximal torus in the semi-simple factor $\SSU(2) \subset G_{i_k}$, and $T^{r-1}$ is any rank-$(r-1)$ subtorus of $T^r$ that contains the maximal tori of each semi-simple factor $\LV_{i} \subset G_{i}$ for $i \neq i_k$.

\medskip
\noindent
Having chosen the above decomposition, notice that we may express any block-diagonal group $G_{i}$ for $i \neq i_k$ in the form $G_{0,i} \rtimes \Delta$, where $G_{0,i}$ is the group generated by $\LV_{i}$ and $T^{r-1}$. In particular, we have a decomposition of $\BC_T(w_J)$ as topological $T$-spaces, with $T$-acting by conjugation on both factors
\[ \BC_T(w_J) = G_{i_1} \times_T G_{i_2} \ldots \times_T G_{i_{k-1}} = (G_{0,i_1} \times_{T^{r-1}} G_{0,i_2} \ldots \times_{T^{r-1}} G_{0,i_{k-1}}) \times \Delta := \BC_{T^{r-1}}(w_J) \times \Delta . \]
The above decomposition extends to a decomposition of $\BC_T(w_I)$ as $T$-spaces 
\[ \BC_T(w_I) = (G_{0,i_1} \times_{T^{r-1}} G_{0,i_2} \ldots \times_{T^{r-1}} G_{0,i_{k-1}}) \times \SSU(2) = \BC_{T^{r-1}}(w_J) \times \SSU(2). \]
Next, consider the map of fibrations induced by the projection maps onto the first factor
\[
\xymatrix{
\Delta \ar[d]^f \ar[r] & \SSU(2) \ar[d]^g \\
 \ET\times_T(\BC_T(w_J)) \ar[d]^{p_J} \ar[r] &  \ET \times_T(\BC_T(w_I))  \ar[d]^{p_I} \\
 \ET\times_T(\BC_{T^{r-1}}(w_J))  \ar[r]^{=} & \ET\times_T(\BC_{T^{r-1}}(w_J)) .
}
\]
The Serre spectral sequences for both fibrations collapse because the maps $p_J$ and $p_I$ admit splittings. Furthermore, it is easy to see that $g^\ast$ maps the class $\beta_s$ to a generator of $\Ho^3(\SSU(2))$. Similarly, $f^\ast$ is easily seen to map the class $\gamma_{l(s)}$ to a generator of $\Ho^1(\Delta)$. It follows from the collapse of the Serre spectral sequence for the fibration $p_I$ that $\Ho_{\SU(r)}^\ast(\BC(w_I))$ is a free module over the exterior algebra $\Lambda(\beta_s)$ with the space of generators given by $\Ho_{T}^\ast(\BC_{T^{r-1}}(w_J))$. Now from the collapse of the Serre spectral sequence for $p_J$ we can (abstractly) identify this space of generators with $\Ho_{\SU(r)}^\ast(\BC(w_J))/\langle \gamma_{l(s)} \rangle$. 
\end{proof}

\smallskip
\begin{remark} \label{restriction}
Consider the restriction map $\Ho_{\SU(r)}^\ast(\BC(w_I)) \longrightarrow \Ho_T^\ast(T)$. This map is injective if $\nu(s) = 1$ for all $s \in \nu(I)$, namely if $I$ is redundancy free. The discussion in the proof of theorem \ref{DomK3-23} describes the restriction applied to the classes $\gamma_l$ and $\beta_s$ 
\[ \gamma_l \longmapsto \sum_{i \in l} \hat{x}_i, \quad  \quad \quad \beta_s \longmapsto \alpha_s \, \hat{\tau}_s = \sum_{i \leq s, \, i \in l(s)} \alpha_s \, \hat{x}_i.\]
\end{remark}

\noindent
The twisted cohomology of the spaces $\BC(w_I)$ is subtle. We may describe the twisted cohomology for redundancy free positive sequences $I$. The general case is described in \cite{Go}. Given a simple positive root for $\SU(r)$, of the form $\alpha_i = x_i - x_{i+1}$, we define
\[ \wp(\alpha_i) := \frac{\wp(x_i) - \wp(x_{i+1})}{x_i - x_{i+1}}. \]
\begin{thm} \label{Twist-8}
Let $I = \{ i_1, i_2, \ldots, i_k \}$ be a redundancy free positive indexing sequence with each $i_j < r$. Let $\Hh$ denote the Borel equivariant completion of $\Ho \wedge \BSU_+$. Then we have a canonical isomorphism of $\Hh_{T}^\ast(pt) = \Z[b_1, b_2, \ldots]\llbracket x_1, \ldots, x_r \rrbracket$-modules 
\[  \sideset{^{\theta}}{_{\SU(r)}^{\ast + r + 2|\nu(I)|}}\Hh(\BC(w_{I})) \cong \frac{\Z[b_1, b_2, \ldots]\llbracket x_1, \ldots, x_r \rrbracket}{\langle  \,  \wp(\alpha_s), \, \,  \mbox{if} \, \, s \in \nu(I), \, \,  \wp(x_{m(l)}),   \, \mbox{if} \, \, l \in \overline{\nu}(I) \, \rangle}, \]
where $m(l)$ denotes the maximal element in the block $l \in \overline{\nu}(I)$. For $I$ as above, let $J = I - \{i_t \}$ with $i_t = s$. Then the restriction map $\sideset{^{\theta}}{_{\SU(r)}^{\ast}}\Hh(\BC(w_I)) \longrightarrow \sideset{^{\theta}}{_{\SU(r)}^{\ast}}\Hh(\BC(w_J))$ injects onto the submodule of elements divisible by $\alpha_{s}$. 
\end{thm}

\begin{proof}
We recall from theorem \ref{UT2T} the spectral sequence of modules over $\Hh_{\SU(r)}^\ast(\BC(w_I))$ that converges to $\sideset{^{\theta}}{_{\SU(r)}^\ast}\Hh(\BC(w_I))$. This spectral sequence starts with the untwisted equivariant cohomology theory $\Hh_{\SU(r)}^\ast(\BC(w_I))$, which we completely understand by virtue of theorem \ref{DomK3-23} to be a free module over the exterior algebra $\Lambda(\beta_I) \otimes \Lambda(\gamma_I)$ generated by the space $\Hh_{T}^\ast(pt)$. We remind the reader that to construct this spectral sequence, we filter $\Ho_\ast(\BSU_+)$ by powers of the ideal generated by the elements $b_1, b_2, \ldots$. This filtration is induced by a stably-split topological filtration of $\BSU_+$ by ideals that give rise to a filtration of $\sideset{^{\theta}}{_{\SU(r)}^\ast}\Hh(\BC(w_I))$ whose associated quotient is $\Hh_{\SU(r)}^\ast(\BC(w_I))$, since the twisting is trivial under the associated quotient. 

\smallskip
\noindent
The first differentia in the above spectral sequencel is determined by what it does on the class ``1", which we express in terms of the generators defined in theorem \ref{DomK3-23}
\[ d(1) = \sum_{s \in \nu(I), \, l \in \overline{\nu}(I)} c_s \, \beta_s + d_l \, \gamma_l, \]
for some coefficients $c_s$ and $d_l$. Recall from \ref{DomK3-23} that the restriction of $\Hh_{\SU(r)}^\ast(\BC(w_I))$ to $\Hh_{T}^\ast(T)$ is injective. Hence, we can identify the coefficients $c_s$ and $d_l$ by unraveling the equality in $\Hh_{T}^\ast(T)$
\[ \sum_{s \in \nu(I), \, l \in \overline{\nu}(I)} (c_s \, \alpha_s \, \tau_s + d_l  \, \gamma_l) = \sum_i \wp(x_i) \, \hat{x}_i, \]
where the right hand side is seen to be differential using theorem \ref{Univdiff}. Since the classes $\tau_s$ and $\gamma_l$ are linearly independent over the ring $\Hh_{T}^\ast(pt)$, an elementary calculation using remark \ref{restriction} shows that $d_l = \wp(x_{m(l)})$, where $m(l)$ is the maximal element in the block $l$. Similarly, $c_s = \wp(\alpha_s)$. We may now compute the cohomology under the first differential, and verify that it is a quotient of the ideal in $\Hh_{\SU(r)}^\ast(\BC(w_I))$ generated by the product class $\prod_{s,l} \beta_s \gamma_l$ in degree $r+2|\nu(I)|$. 

\smallskip
\noindent
By parity reasons, the above spectral sequence collapses. The result is easily seen to be a cyclic submodule for $\Hh_{T}^\ast(pt)$ with a shift in degree given by $r+2|\nu(I)|$, and is canonically isomorphic to 
\begin{equation} \label{redfree} \frac{\Z[b_1, b_2, \ldots]\llbracket x_1, \ldots, x_r \rrbracket}{\langle  \,  \wp(\alpha_s), \, \,  \mbox{if} \, \, s \in \nu(I), \, \,  \wp(x_{m(l)}),   \, \mbox{if} \, \, l \in \overline{\nu}(I) \, \rangle}. \end{equation}
Naturality under restriction also follows from the above description. We have therefore proven the theorem. 
\end{proof}

\medskip
\begin{example} \label{bootstrap}
In certain instances, one may explicitly compute the cohomology of spaces $\BC(w_I)$ even when $I$ is not redundancy free. For instance, consider the indexing sequence of the form $I_s = (s, s, \cdots, s)$ with only one index repeated $k$ times. In this special case, one has a pullback diagram
\[
\xymatrix{
\BC(w_{I_s}) \ar[d] \ar[r]^{M} &  \BC(\sigma_s) \ar[d] \\
    \BSa(w_{I_s}) \ar[r]^{M} & \BSa(\sigma_s). 
}
\]
where the vertical maps the the canonical maps from the spaces of broken symmetries to the Bott Samelson spaces and the horizontal maps $M$ are induced by the $k$-fold group multiplication in $G_s$. It is straightforward to see that the maps $M$ are split, and that the map $M : \BSa(w_{I_s}) \longrightarrow \BSa(\sigma_s)$ has the property
\[ M^\ast (\delta) = \hat{\delta} := \delta_1 + \delta_2 + \cdots + \delta_k. \]
It follows from the Eilenberg-Moore spectral sequence that 
\[ \Ho_{\SU(r)}^\ast(\BC(w_{I_s})) = \Ho_{\SU(r)}^\ast( \BC(\sigma_s)) \otimes_{\Ho^\ast_{\SU(r)}(\BSa(\sigma_s))} \Ho_{\SU(r)}^\ast( \BSa(w_{I_s}) ). \]
A similar argument shows that in the twisted case
\[ \sideset{^{\theta}}{_{\SU(r)}^\ast}\Hh(\BC(w_{I_s})) = \sideset{^{\theta}}{_{\SU(r)}^\ast}\Hh( \BC(\sigma_s)) \otimes_{\Ho^\ast_{\SU(r)}(\BSa(\sigma_s))} \Ho_{\SU(r)}^\ast( \BSa(w_{I_s}) ). \]
This example easily generalizes to the case when $I$ is an indexing sequence with the property that given any $s \in \nu(I)$, all occurances of $s$ are in consecutive order. In such a case, one can use sequential multiplication to construct a map $M$ on $\BC(w_I)$ taking values in $\BC(w_{J})$, for some redundancy free sequence $J$ with $\nu(J) = \nu(I)$. 
\end{example}

\section{Appendix: Examples of Twisted Theories} \label{Appendix2}

\medskip
\noindent
In this section, we describe in brief two examples of twisted theories in addition to the one we have already studied. We begin with a simple example of twisted cohomology in the usual sense, and then we construct a twist on equivariant K-theory.  

\medskip
\noindent
Consider equivariant singular cohomology $\Ho_{\SU(r)}[\Z]$ with coefficients in the group-ring of $\Z$, which we identify with Laurent polynomials $\Z[b_0^{\pm 1}]$ on a class $b_0$ in degree zero. In other words
\[ \Ho_{\SU(r)}[\Z] = \Map(\ESU(r), \Ho \wedge \Z_+), \quad \quad  \Ho_{\SU(r)}[\Z]^\ast(pt) = \Ho^\ast_{\SU(r)}(pt)[b_0^{\pm}], \quad |b_0|=0.\]
$\Ho_{\SU(r)}[\Z]$ admits a Galois symmetry $\sigma_{[\Z]}$ that acts by $-1$ on $\Z$. In particular $\sigma_\Z(b_0) = b_0^{-1}$. 

\medskip
\noindent
We define a twist $\theta_r$ on $\Ho_{\SU(r)}[\Z]$ in terms of definition \ref{Twist-6a} by the action
\[ \theta_r  :\OSU(r)_+ \wedge \Ho_{\SU(r)}[\Z] \longrightarrow \Ho_{\SU(r)}[\Z], \]
that is given by the projection map $\OSU(r) \rightarrow \Z$ followed by the canonical action of $\Z$ on $\Ho_{\SU(r)}[\Z]$. It is straightforward to check that $\sigma_\Z \theta \sigma = \theta$. In other words, the twist $\theta$ is invariant under the conjugate Galois symmetry. It is straightforward to check that the twisted cohomology of a space $\BC(w_I)$ is none other than the equivariant singular cohomology of $\BC(w_I)$ with twisted coefficients:
\[ \pi_1(\rho_I) : \pi_1(\BC(w_I)) \longrightarrow \pi_1(\SU(r)) = \Z \subset \Aut_\Z(\Z[b_0^{\pm 1}]). \]
In other words, we have $\sideset{^{\theta_r}}{_{\SU(r)}^\ast}\Ho(\BC(w_I)) = \Ho_{\SU(r)}^\ast(\BC(w_I), \Z[\Z])$. This twisted cohomology of $\BC(w_I)$ is given by imposing the relation $\sum_{l \in \overline{\nu}(I)} \gamma_l = 0$ (see theorem \ref{DomK3-23}) in exchange for shifting the cohomological degree up by one. As such it appears to be an INS-type equivariant theory whose corresponding link homology is a reduced version of triply graded link homology. 

\medskip
\noindent
\begin{remark} \label{gl0}
The above example can be interpreted in terms of the univerersal twisting $\wp(x)$ as defined in claim \ref{Twist-6b}, where we allow a constant term $(1-b_0)$ and set all other variables $b_i$ as $0$. In other words, we can interpret it as ``$sl(0)$-link homology." 
\end{remark}

\bigskip
\noindent
Let us now move on to an example of a twist on $\SU(r)$-equivariant K-theory, $\Ks_{\SU(r)}$. This twist has been of interest lately due to work of Freed-Hopkins-Teleman \cite{AS, FHT} in which they relate this twisted K-theory for arbitrary compact Lie groups G to the Grothendieck group of representations of the smooth Loop group $\LG$. Indeed, one can think of this twisted K-theory of G as an equivariant K-theory for $\LG$, One may develop that idea for a more general class of groups known as Kac-Moody groups, that extend Loop groups. This direction has been explored in \cite{Ki} where these theories go by the name {\em Dominant K-theory}. In this section, we prefer to use the expression twisted K-theory of $\SU(r)$ instead of Dominant K-theory of $\LSU(r)$. 

\medskip
\noindent
To define the twisting of $\Ks_{\SU(r)}$ we start with the construction of a suitable Hilbert space of $\LSU(r)$ representations, and model the representing objects of the twisted theory as a space of Fredholm operators on that Hilbert space. The representations of $\LSU(r)$ we will consider in this section are known as positive-energy representations \cite{PS}. These representations are infinite dimensional Hilbert representations. Since the theory of positive-energy representations requires us to fix a integral level $n >0$, we do so henceforth. 

\medskip
\noindent
Let us get a sense of what these positive-energy representations look like. Consider the (real) Hilbert space $\Tr^r$ defined as the closure of the trignometric functions with values in the real $2r$-dimensional vector space underlying $\C^r$. This space has a (dense) basis:
 \[ \{ e_i \cos(kt), \, \, e_i \sin(st), \quad  k \geq 0, \, \,  s > 0, \, \, 1 \leq i \leq 2r, \, \, 0 \leq t \leq 2\pi \}. \]
 The Euclidean inner product on $\Tr^r$ is given by integrating the standard Euclidean inner product: 
  \[ \langle f(t), g(t) \rangle = \frac{1}{2\pi } \int \langle f(t), g(t) \rangle \, dt. \]
 We may extend the Euclidean inner product on $\Tr^r$ complex linearly to a non-degenerate bilinear form on $\Tr^r \otimes_\R \C$. 
 
 \medskip
 \noindent
 One can now define the $C^*$-algebra $\mathcal{C}$ generated by the Clifford relations:
 \[ f(t) \, g(t) + g(t) \, f(t) = \langle f(t), g(t) \rangle.
 \] 
 Notice that one has a canonical identification of $\Tr^r \otimes_\R \C$ as the completion of Laurent polynomials on $\C^r \otimes_\R \C$:
 \[ \Tr^r \otimes_\R \C = L^2(S^1, \C^r \otimes_\R \C) = \C^r \hat{\otimes}_\R \C [z, z^{-1}], \quad z = e^{i t}.  \]

 \medskip
 \noindent
 Let $J$ denote the complex structure on $\C^r$. Notice that the $\pm i$-eigenspaces of the complex linear extension of $J$ yields an isotropic decomposition: $\C^r \otimes_\R \C = \overline{W} \oplus W$ \footnote{$W$ is canonically isomorphic to $\C^r$ as a $\SU(r)$-representation}. This induces an isotropic decomposition of $\C^r \otimes_\R \C[z,z^{-1}]  = H^r_+ \oplus H^r_-$ with
  \[ H^r_+ = W[z] \oplus z\overline{W}[z], \quad H^r_- = \overline{W}[z^{-1}] \oplus z^{-1}W[z^{-1}]. \]
 
 \noindent
We will denote by $\Lambda^*(H^r_+)$ the irreducible unitary representation of  $\mathcal{C}$ given by the Hilbert completion of the exterior algebra on $H^r_+$ , with $H^r_+ \subset \mathcal{C}$ acting by exterior multiplication, and $H^r_- \subset \mathcal{C}$ acting by extending the contraction operator using the derivation property.  By construction, $\LSU(r)$ preserves the inner product on $\Tr^r$, and hence it acts on $\mathcal{C}$ by algebra automorphisms. Since $\Lambda^*(H^r_+)$ is the unique representation of $\mathcal{C}$\footnote{Strictly speaking, for this to be true, we must first fix a polarization or equivalence class of maximal isotropic subspaces equivalent to $H^r_{\pm}$} , Schur's lemma says that we get a canonical projective action of $\LSU(r)$ on $\Lambda^*(H^r_+)$ that intertwines the action of $\mathcal{C}$ twisted by the action of $\LSU(r)$. The induced map $\rho(r) : \LSU(r) \longrightarrow \PU(\Lambda^*(H^r_+))$ may be lifted over $\SU(\Lambda^*(H^r_+))$ giving rise to the universal level $n=1$ central extension $\tilde{\LSU}(r) \longrightarrow \LSU(r)$. This representation is called the fermionic Fock space representation. 

\medskip
\noindent
\begin{defn} (The universal central extension $\tilde{\LSU}(r)$ and the fermionic Fock space) \label{Fock}

\noindent
Let $\tilde{\LSU}(r)$ denote the universal central extension of $\LSU(r)$ defined by virtue of the homomorphism $\rho(r) : \LSU(r) \longrightarrow \PU(\Lambda^*(H^r_+))$ above. This central extension $\tilde{\LSU}(r)$ splits over the constant loops $\SU(r) \subset \LSU(r)$. Since the representation ring of $\SU(r)$ is generated by the representations $\Lambda^k(\C^r)$, it follows from the construction that $\Lambda^*(H_+)$ contains all irreducible representations of $\SU(r)$ under the restriction $\SU(r) \subset \tilde{\LSU}(r)$. It is also straightforward to see that $\Lambda^*(H_+^r) = \Lambda^*(H_+^s) \hat{\otimes} \Lambda^*(H^t_+)$ under the central extension of the diagonal map $\LSU(s) \tilde{\times} \LSU(t) \longrightarrow \tilde{\LSU}(r)$, where $r = s+t$. 
\end{defn}

\medskip
\begin{remark} \label{Galois6}
Let $\T$ denote the rotation group acting on $\LSU(r)$ by reparametrizing $S^1$. This action lifts to an action on $\tilde{\LSU}(r)$. Furthermore, the action of $\tilde{\LSU}(r)$ on $\Lambda^*(H^r_+)$ extends to an action of the group $\T \ltimes \tilde{\LSU}(r)$. Let $x_1,x_2, \ldots, x_r$ denote the diagonal characters of the standard representation of $\SU(r)$ on $\C^r$, and let $u$ denote the central character of $\tilde{\LSU}(r)$. Also, let $q$ denote the fundamental character of $\T$. Then it is easy to see that the character of the fermionic Fock space is given by:
\[ \mbox{Ch}(\Lambda^*(H^r_+)) = u \, \prod_{m=0}^\infty \prod_{i=1}^r (1+x_i q^m)(1+x^{-1}_i q^{m+1}). \]
One may define a complex linear Galois symmetry $\sigma_{\Ks}$ on the fermionic Fock space that has the property $\sigma_{\Ks}(x_i) = x_i^{-1}q$ and $\sigma_{\Ks}(q) = q$. Then $\sigma_{\Ks}$ induces a symmetry on the space of Fredholm operators on the fermionic Fock space (by conjugation with $\sigma_{\Ks})$. 
\end{remark}
 
 \medskip
\begin{defn} (Level $n$ positive-energy representations)

\noindent
We define an irreducible level $n$ positive-energy representation of $\tilde{\LSU}(r)$ to be any irreducible $\tilde{\LSU}(r)$-representation that is a sub-representation of the $n$-fold (completed) tensor product of $\Lambda^*(H^r_+)$ (see \cite{PS} Thm. 10.6.4). There are only finitely many irreducible positive-energy representations of level $n$, and that any extensions of two irreducible positive-energy representations splits \cite{PS}. In particular, one has a semi-simple category of positive-energy level $n$ representations of $\tilde{\LSU}(r)$. 
\end{defn}

\medskip
\noindent
Before we define twisted K-theory, let us first recall that usual two-periodic K-theory is represented by homotopy classes of maps into the space of Fredholm operators $\gF(\gH)$ (with the norm topoogy) in a separable Hilbert space $\gH$. This space of operators has the homotopy type of the infinite Grassmannian $\Z \times \BSU$. The theorem of Bott periodicity ensures that $\Z \times \BSU$ is an infinite loop space, thereby defining a (two-periodic) cohomology. 

\medskip
\noindent
The construction K-theory described above has an equivariant analog as well \cite{Se}. Given a compact Lie group G, the object that represents G-equivariant K-theory is the G-space of Fredholm operators (under $G$-conjugation) in a Hilbert G-representation that contains all G-representations, with infinite multiplicity. With this as motivation, we define

\medskip
\begin{defn} (The $\LSU(r)$-equivariant theory $\sideset{^n}{_{\SU(r)}}\Ks$) \label{Dom}

\noindent
Let $\gH_n$ denote the Hilbert space completion of countable copies of all level $n$ positive-energy representations of $\tilde{\LSU}(n)$. Let $\gF(\gH_n)$ denote the space of Fredholm operators on $\gH_n$. By choosing a suitable variation of the norm-topology (see \cite{AS} section 3), the underlying homotopy type of $\gF(\gH_n)$ is given by the infinite loop-space $\Z \times \BSU$ and the group $\tilde{\LSU}(r)$ admits a continuous action on $\gF(\gH_n)$ by conjugation of operators which factors through $\LSU(r)$. Furthermore, the infinite loop-space structure on $\gF(\gH_n)$ is compatible with respect to this action (see \cite{AS} sections 3, 4) giving rise to an $\LSU(r)$-equivariant cohomology theory denoted by $\sideset{^n}{_{\SU(r)}}\Ks$. 
\end{defn}

\medskip
\begin{remark} \label{Fock2}
By definition \ref{Fock}, $\gH_n$ contains all $\SU(r)$-representations with infinite multiplicity, hence the $\SU(r)$-equivariant theory underlying $\sideset{^n}{_{\SU(r)}}\Ks$ is simply $\SU(r)$-equivariant K-theory $\Ks_{\SU(r)}$. In particular, $\sideset{^n}{_{\SU(r)}}\Ks$ represents a twist $\theta_r$ on $\Ks_{\SU(r)}$ indexed by the level $n$ representations of $\LSU(r)$. The twists of $\Ks_{\SU(r)}$ have been classified by $\Ho^3_{\SU(r)}(\SU(r))$ \cite{AS}, where $\SU(r)$ is seen as a $\SU(r)$-space under conjugation. As such, $\theta = \{ \theta_r \}$ represents the class $n B_1$ (see theorem \ref{Univdiff}). 

\medskip
\noindent
Notice that one also has a Galois symmetry $\sigma$ on $\LSU(r)$ by pointwise complex conjugation. This action is compatible with the symmetry $\sigma_{\Ks}$ on $\sideset{^n}{_{\SU(r)}}\Ks$ induced by the Galois symmetry on the fermionic Fock space (see remark \ref{Galois6}). In other words, the twist $\theta$ is invariant under the conjugate symmetry, giving rise to a Galois symmetry on the twisted equivariant K-theory groups of $\BC(w_I)$. 
\end{remark}

\medskip
\begin{example} 
Consider the simplest space of Broken symmetries $\SU(r) \times_{T^r} T^r$. As with twisted equivariant cohomology, one may easily compute its twisted K-theory 
\[ \sideset{^n}{_{\SU(r)}^{r}}\Ks(\SU(r) \times_{T^r} T^r) = \frac{\Z[x_1^{\pm}, \ldots, x_r^{\pm}]}{\langle x_1^n-1, \ldots, x_r^n-1 \rangle}, \quad \sideset{^n}{_{\SU(r)}^{r-1}}\Ks(\SU(r) \times_{T^r} T^r) = 0. \]
The action of the conjugate Galois symmetry $\sigma$ is given by $\sigma(x_i) = x_i^{-1}$. 
\end{example}

\medskip
\noindent
As in the case of the twisted theory $\{ \Hh, \theta_\wp \}$, computing $\sideset{^n}{_{\SU(r)}}\Ks$ on more general spaces $\BC(w_I)$ and verifying that one has an INS-type theory is a non-trivial problem. This remains work in progress.

\pagestyle{empty}
\bibliographystyle{amsplain}
\providecommand{\bysame}{\leavevmode\hbox
to3em{\hrulefill}\thinspace}

\end{document}